\def\ker{\operatorname{ker}}
\def\max{\operatorname{max}}
\def\N{\mathbb{N}}
 \newcommand{\IF}[0]{\mathbb{F}}
\newcommand{\CQ}[0]{\mathcal{Q}}
\newtheorem{thm}{Theorem}[section]
\newtheorem{corollary}[thm]{Corollary}
\newtheorem{lemma}[thm]{Lemma}
\newtheorem{proposition}[thm]{Proposition}
\theoremstyle{definition}
\newtheorem{definition}[thm]{Definition}
\theoremstyle{remark}
\newtheorem{remark}[thm]{Remark}
\numberwithin{equation}{section}
\begin{document}

\title[Graph products and the absence of property (AR)]{Graph products and the absence of property (AR)}

\author{Nicolai Stammeier}
\address{Department of Mathematics \\ University of Oslo \\ P.O.~Box 1053 Blindern \\ NO-0316 Oslo, Norway}
\email{nicolsta@math.uio.no}
\thanks{The author was supported by RCN through FRIPRO 240362.}
\subjclass[2010]{20M10 (Primary) 20F36, 46L05 (Secondary)}

\begin{abstract}
We discuss the internal structure of graph products of right LCM semigroups and prove that there is an abundance of examples without property (AR). Thereby we provide the first examples of right LCM semigroups lacking this seemingly common feature. The results are particularly sharp for right-angled Artin monoids. 
\end{abstract}
\maketitle

\section{Introduction}\label{sec:intro}
The starting point of a number of recent breakthroughs in the theory of semigroup $C^*$-algebras is the seminal work \cites{Li1,Li2}, in which a universal $C^*$-algebra $C^*(S)$ is associated to every left cancellative monoid $S$. In the last years, a particular line of research focused on left cancellative monoids for which the intersection of two principal right ideals is either empty, or another principal right ideal again. Such monoids are called \emph{right LCM semigroups}, and they form an intriguing and tractable class of examples in between positive cones in quasi-lattice ordered groups and general left cancellative monoids, see \cite{BLS1}*{Lemma~3.3 and Corollary~3.6} for details.

Inspired by the treatment of the quasi-lattice ordered case in \cite{CrispLaca}, a \emph{boundary quotient} $\CQ(S)$ of $C^*(S)$ was introduced for right LCM semigroups $S$ in \cite{BRRW}. Soon thereafter, Starling provided an in-depth analysis of $\CQ(S)$ in \cite{Star}, relying on major advances in the understanding of the connections between inverse semigroups, groupoids, and $C^*$-algebras stemming from \cites{EP1,EP2,Ste}. In \cite{BS1}, it was shown that the boundary quotient has a more accessible presentation if the right LCM semigroup has the so-called \emph{accurate refinement property}, henceforth abbreviated \emph{property (AR)}. This property is an analogue of $0$-dimensionality for topological spaces in the context of semigroups, and is enjoyed by various examples, see \cite{BS1}*{Section~2 and Corollary~3.11}. 

The presence of property (AR) was found to be useful in the construction of a \emph{boundary quotient diagram} for right LCM semigroups in the spirit of \cite{BaHLR}, see \cite{Sta3}. This diagram sets the grounds for a unifying approach to the study of equilibrium states on $C^*$-algebras in \cite{ABLS}, where remarkable results on the structure of KMS-states on $C^*(S)$ were obtained for right LCM semigroups satisfying an admissibility condition which implies property (AR), see Subsection~\ref{subsec:right LCM}. Working with abstract right LCM semigroups as opposed to explicit classes of examples allowed for a unification of the inspiring case studies \cites{LR2,BaHLR,LRR,LRRW,CaHR}, and also for coverage of a substantial amount of new examples, most notably, algebraic dynamical systems. Moreover, the techniques in \cites{BS1,Sta3,ABLS} raise several questions on the structure of right LCM semigroups, perhaps most notably:

\begin{enumerate}[(a)]
\item Are there right LCM semigroups without property (AR)?
\item Which right LCM semigroups are admissible? 
\end{enumerate}

The aim of the present work is to investigate in how far graph products of right LCM semigroups as considered in \cites{Cos,FK} provide answers to these two questions. In addition, we also address structural aspects related to the distinguished subsemigroups $S^*, S_c$ and $S_{ci}$. We apply our results to the classical case of right-angled Artin monoids $A_\Gamma^+$ given by an undirected graph $\Gamma$ since many graph related phenomena can already be witnessed here. Indeed, the explicit presentation of the boundary quotient in \cite{CrispLaca}*{Corollary~8.5} involving only the vertex sets of the finite coconnected components of the graph $\Gamma$ may be regarded as an indication for a particularly accessible structure of foundation sets. Another motivation comes from the elegant solution to the isomorphism problem for $C^*(A_\Gamma^+)$, see \cite{ELR}.

Since property (AR) is known for various kinds of right LCM semigroups, we were struck by surprise to find that a right-angled Artin monoid $A_\Gamma^+$ has property (AR) if and only if all of its finitely generated direct summands are free, see Corollary~\ref{cor:AR for ra Artin monoids}. In terms of the $\Gamma$, this means that all finite  coconnected components $\Gamma_i$ do not contain any edges. The result follows from more general graph product considerations in Corollary~\ref{cor:AR for graph products} that rely on Theorem~\ref{thm:lack of acc FS}, where we show that graph products over infinite coconnected graphs have no foundation set other than the obvious ones containing an invertible element, while the analogous statement holds in the finite case for accurate foundation sets.

The characterisation of property (AR) for right-angled Artin monoids $A_\Gamma^+$ in Corollary~\ref{cor:AR for ra Artin monoids} allows us to determine when $A_\Gamma^+$ is admissible in the sense of \cite{ABLS}. It turns out that admissibility and the existence of a generalised scale coincide for right-angled Artin monoids, see Corollary~\ref{cor:gen scale for ra Artin monoids} and Corollary~\ref{cor:admissible ra Artin monoids}. If existent, the generalised scale on $A_\Gamma^+$ is unique and arises as the product of the unique generalised scales on its non-abelian direct summands $A_{\Gamma_i}^+$, see Proposition~\ref{prop:gen scale on direct sum of free monoids} and Corollary~\ref{cor:gen scale for ra Artin monoids}. 

Thus we are lead to the conclusion that graph products of right LCM semigroups mostly lack property (AR), and are therefore not admissible in the sense of \cite{ABLS}. While this rules out the possibility of applying \cite{ABLS} to graph products of right LCM semigroups in great generality, we obtain a fairly detailed description of the behaviour of graph products with respect to the subsemigroups $S_c$ and $S_{ci}$, see Theorem~\ref{thm:reg prop for graph products}. These result show that the graph product represents a useful tool to construct new, and potentially very interesting examples of right LCM semigroups that are well-behaved to some degree, but demand more sophisticated techniques then those applicable to right LCM semigroups that have property (AR) or even a generalised scale. That is why we feel that this work might stimulate further research in the direction of inverse semigroups and groupoids related to (right LCM) semigroups and their $C^*$-algebras.

\emph{Acknowledgements:} The author thanks Nathan Brownlowe, Nadia Larsen, and Adam S{\o}rensen for helpful conversations.

\section{Preliminaries}\label{sec:background}
Here we provide the prerequisites we shall need concerning right LCM semigroups and graph products.
 
\subsection{Right LCM semigroups}\label{subsec:right LCM}
A left cancellative semigroup $S$ is called \emph{right LCM} if the intersection of two principal right ideals in $S$ is either empty or a principal right ideal. For $s,t \in S$, we say that $s$ and $t$ are \emph{orthogonal} and write $s\perp t$ if $sS \cap tS = \emptyset$. Unless specified otherwise, we will always assume that a right LCM semigroup $S$ has an identity, i.e.~$S$ is a monoid. 

Let us first discuss property (AR). A finite subset $F \subset S$ is called a \emph{foundation set} for $S$ if for every $s \in S$ there is $f \in F$ such that $f \not\perp s$, see \cite{BRRW}*{Section~5}. A subset $F\subset S$ is \emph{accurate} if $f\perp f'$ for all $f,f' \in F, f\neq f'$, see \cite{BS1}*{Definition~2.1}. If $F,F'$ are foundation sets such that $F' \subset FS$, then $F'$ is called a \emph{refinement} of $F$. We then say that $S$ has the \emph{accurate refinement property}, or property (AR), if every foundation set for $S$ has an accurate refinement, see \cite{BS1}*{Definition~2.3}.

For a right LCM semigroup $S$, its subgroup of invertible elements shall be denoted by $S^*$. This subgroup lies inside the \emph{core subsemigroup} $S_c:=\{ a \in S \mid a \not\perp s \text{ for all } s \in S\}$, which was first considered for right LCM semigroups in \cite{Star}, but stems from \cite{CrispLaca}*{Definition~5.4}. We remark that $S_c$ is again a right LCM semigroup. Furthermore, it induces an equivalence relation $s \sim t :\Leftrightarrow sa=tb$ for some $a,b \in S_c$ called the \emph{core relation}. In contrast to $S_c$, we also consider the subsemigroup $S_{ci}$ of \emph{core irreducible} elements, that is, the collection of all elements $s \in S\setminus S_c$ for which every factorization $s=ta$ with $t \in S, a \in S_c$ satisfies $a \in S^*$. While $S_{ci}$ does not have an identity by construction, its unitisation $S_{ci}^1 := S_{ci}\cup\{1\}$ and $S_{ci}' := S_{ci} \cup S^*$ do. 

A right LCM semigroup $S$ is called \emph{core factorable} if $S=S_{ci}^1S_c$. We say that $S_{ci} \subset S$ is \emph{$\cap$-closed} if $sS \cap tS = rS$ implies $r \in S_{ci}$ whenever $s,t \in S_{ci}$. To provide some indication why this property is of interest, let us mention that $S_{ci} \subset S$ is $\cap$-closed if and only if $S_{ci}'$ is right LCM and its inclusion into $S$ is a homomorphism of right LCM semigroups, i.e.~it preserves intersections of principal right ideals, see \cite{ABLS}*{Proposition~3.3}. Finally, a nontrivial homomorphism $N\colon S \to \N^\times$ is called a \emph{generalised scale} if $|N^{-1}(n)/_\sim| = n$ and every minimal complete set of representatives for $N^{-1}(n)/_\sim$ forms an accurate foundation set for $S$ for all $n \in N(S)$. Every generalised scale $N$ satisfies $\ker N = S_c$ by \cite{ABLS}*{Proposition~3.6(i)}, and the existence of a generalised scale entails vital information on the structure of $S$. For instance, it implies that the right LCM semigroup has property (AR), see  \cite{ABLS}*{Proposition~3.6(v)}.

Finally, we recall from \cite{ABLS}*{Definition~3.1} that a right LCM semigroup $S$ is called \emph{admissible}, if it is core factorable, $S_{ci} \subset S$ is $\cap$-closed, and $S$ admits a generalised scale $N$ such that $N(S) \subset \N^\times$ is freely generated by its irreducible elements.

\subsection{Graph products}\label{subsec:graph products}
Within this work, a \emph{graph} will mean a countable, undirected graph $\Gamma = (V,E)$ without loops or multiple edges. The concept of a \emph{graph product of groups} emerged in \cite{Gre} as a generalization of \emph{graph groups}, and has been transferred to the setting of monoids in \cite{Cos}: For a graph $\Gamma = (V,E)$ and a family of monoids $(S_v)_{v \in V}$, the \emph{graph product} is the monoid $S_\Gamma$ obtained as the quotient of the direct sum $\bigoplus_{v \in V} S_v$ by the congruence generated by the relation $(st,ts)$ if $s \in S_v, t \in S_w$ with $(v,w) \in E$, see \cite{Cos}*{Section~2} and \cite{FK}*{Section~1}. Given a graph $\Gamma$, its \emph{right-angled Artin monoid} $A_\Gamma^+$ is the graph product with $S_v = \N$ for all $v \in V$. These monoids have also been studied under the names of graph monoids, free partially commutative monoids, and trace monoids, see for instance \cite{Die}. If one switches the vertex monoids from the natural numbers to the integers, the resulting graph product is the \emph{right-angled Artin group} $A_\Gamma$ associated to $\Gamma$, see \cite{Cha} for more.

It was shown in \cite{CrispLacaT} that the graph product is well-behaved with respect to quasi-lattice orders. Invoking a characterization of the right LCM property via the inverse hull semigroup, Fountain and Kambites showed that this can be generalised to right LCM semigroups, see \cite{FK}*{Theorem~2.6}, where we note that we can move back and forth between right cancellative, left LCM semigroups (used in \cite{FK}) and left cancellative, right LCM semigroups by passing to the opposite semigroup.

According to \cite{FK}*{Theorem~1.1}, which is an adaptation of the corresponding result in \cite{Gre}, every element $s$ in a graph product $S_\Gamma$ is represented by an essentially unique \emph{reduced expression} $s_{v(1)}s_{v(2)}\cdots s_{v(n)}$, that is, $s_{v(k)} \in S_{v(k)}, v(k) \neq v(k+1)$, and whenever there are $1 \leq k<m \leq n$ such that $v(k)=v(m)$, then there exists $k < \ell < m$ such that $(v(k),v(\ell)) \notin E$. The analogous result had been proven in the quasi-lattice ordered case before, see \cite{CrispLacaT}*{Theorem~2}. The reduced expression is unique in the sense that any two reduced expressions for the same element are \emph{shuffle equivalent}, i.e.~we can move from one to the other by a finte number of switches of neighbouring factors whose vertices are adjacent in $\Gamma$. Thus there exists a subadditive function $\ell\colon S_\Gamma \to \N$ that assigns the length of any reduced expression to the element in question.

A graph $\Gamma$ is said to be coconnected if there exists no partition $V = V_1 \sqcup V_2$ with $V_i \neq \emptyset$ and $V_1\times V_2 \subset E$. Equivalently, $\Gamma$ is coconnected if the opposite graph $\Gamma^{\text{opp}} := (V, V\times V\setminus (E\cup \{(v,v) \mid v \in V\}))$ is connected. The decomposition of $\Gamma$ into its \emph{coconnected} components is the initial step in the analysis of $S_\Gamma$, see for instance \cite{ELR}, where the synonym \emph{co-irreducible} is used. Every graph $\Gamma$ has a unique decomposition into coconnected components, which we denote by $(\Gamma_i)_{i \in I}$ with $\Gamma_i = (V_i,E_i)$. The original graph can be recovered from $(\Gamma_i)_{i \in I}$ as $V = \bigsqcup_{i \in I} V_i$ and $E = \{ (v,w) \in V\times V \mid (v,w) \in E_i \text{ or } w \notin V_i \ni v \text{ for some } i \in I\}$. It follows from this observation that $S_\Gamma$ coincides with the direct sum $\bigoplus_{i \in I} S_{\Gamma_i}$ over the graph products obtained from its coconnected components. 

A vertex $v \in V$ is called \emph{isolated} if $v$ does not emit any edge, and \emph{universal} if $v$ is connected to every other vertex in $\Gamma$. We note that the only coconnected graph with a universal vertex $v$ is $V=\{v\}$, and that any graph containing an isolated edge is necessarily coconnected. For convenience, we let $V_u$ denote the set of universal vertices, and $I_2 := \{ i \in I \mid \lvert V_i\rvert \geq 2 \}$.

We will make use of the following notion of a blocking path, that is actually a path in the opposite graph.

\begin{definition}\label{def:blocking path}
Let $\Gamma=(V,E)$ be a graph and $C \subset V$. A \emph{blocking path} for $C$ is a finite sequence of vertices $w(1),\ldots,w(n) \in V$ such that 
\begin{enumerate}[(a)]
\item $w(1) \notin C$, $(w(k),w(k+1)) \notin E$ for all $1\leq k \leq n-1$, and
\item for every $u \in C$ there exists $1\leq k\leq n$ such that $(w(k),u) \notin E$.
\end{enumerate}
\end{definition}

It turns out that blocking paths are almost always available whenever the graph is coconnected, and we will frequently make use of this elementary observation in the course of this work.

\begin{lemma}\label{lem:blocking paths in coconnected graphs}
If $\Gamma$ is a coconnected graph with at least two vertices, then every finite proper subset $C$ of $V$ admits a blocking path ending in any prescribed vertex.
\end{lemma}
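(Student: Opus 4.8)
The plan is to translate everything into the opposite graph $\Gamma^{\text{opp}}$, where condition~(a) becomes the statement that $w(1),\dots,w(n)$ is an honest walk, i.e.\ each consecutive pair $(w(k),w(k+1))$ is an edge of $\Gamma^{\text{opp}}$. The key observation is that $\Gamma$ has no loops, so $(u,u)\notin E$ for every vertex $u$; hence for a given $u\in C$ condition~(b) is fulfilled as soon as $u$ itself occurs on the path (or some $w(k)$ is a $\Gamma^{\text{opp}}$-neighbour of $u$). In particular, it suffices to produce a walk in $\Gamma^{\text{opp}}$ that starts outside $C$, passes through every vertex of $C$, and ends at the prescribed vertex. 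Since $\Gamma$ is coconnected, $\Gamma^{\text{opp}}$ is connected, which is exactly what lets us join prescribed vertices by walks.

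Concretely, I first fix the prescribed endpoint $v\in V$ and choose a starting vertex $a\in V\setminus C$, which is possible because $C$ is proper. If $C=\emptyset$, any $\Gamma^{\text{opp}}$-path from $a$ to $v$ is already a blocking path (condition~(b) being vacuous), so I may assume $C=\{u_1,\dots,u_m\}$ is nonempty and finite. Using connectedness of $\Gamma^{\text{opp}}$, I select a path $P_0$ from $a$ to $u_1$, paths $P_i$ from $u_i$ to $u_{i+1}$ for $1\le i\le m-1$, and a final path $P_m$ from $u_m$ to $v$, and concatenate them into one finite sequence $w(1),\dots,w(n)$ with $w(1)=a$ and $w(n)=v$.

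It then remains to verify the two defining properties. Condition~(a) holds because $w(1)=a\notin C$ and because consecutive vertices of the concatenated walk are adjacent in $\Gamma^{\text{opp}}$, that is, satisfy $(w(k),w(k+1))\notin E$. Condition~(b) holds because each $u_i$ occurs as the junction vertex between $P_{i-1}$ and $P_i$ (for $i=1$, between $P_0$ and $P_1$), so some $w(k)$ equals $u_i$, whence $(w(k),u_i)=(u_i,u_i)\notin E$ by the absence of loops. Finiteness of $C$ guarantees that the resulting blocking path is finite.

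The content of the argument is essentially the reformulation in the first paragraph: once blocking paths are recognised as walks in $\Gamma^{\text{opp}}$ whose vertex set dominates $C$ in $\Gamma^{\text{opp}}$, the claim collapses to connectedness, so I do not expect any serious obstacle. The only point that needs a little care is the simultaneous demand that the walk start \emph{outside} $C$ and terminate at a \emph{prescribed} vertex; this is precisely why I construct it as a concatenation that first covers all of $C$ and only afterwards travels to $v$, rather than attempting to read off a suitable path in one stroke.
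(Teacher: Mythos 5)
Your proof is correct, and it rests on the same skeleton as the paper's: pass to $\Gamma^{\text{opp}}$, where a blocking path becomes an honest walk, use connectedness of $\Gamma^{\text{opp}}$ (equivalently, coconnectedness of $\Gamma$) to concatenate finitely many walks, and append a final walk reaching the prescribed vertex. Where you genuinely differ is in how condition (b) of Definition~\ref{def:blocking path} is secured. The paper dominates $C$ by auxiliary vertices: it uses coconnectedness once to find a starting vertex $w(1) \in V\setminus C$ non-adjacent to some vertex of $C$, and again to choose, for each remaining $v(k) \in C$, a non-neighbour $w'(k)$; the walk is then threaded through these $w'(k)$. You instead route the walk through the vertices of $C$ themselves and invoke the standing convention of Subsection~\ref{subsec:graph products} that graphs have no loops, so that $(u,u)\notin E$ already witnesses (b) for every $u \in C$ visited. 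Your variant is leaner: coconnectedness enters only via connectedness of $\Gamma^{\text{opp}}$, the start $a \in V\setminus C$ is arbitrary, and no auxiliary choices are needed. The trade-off is that your argument depends on the loop-free convention, and it always produces paths that meet $C$, whereas the paper's construction dominates $C$ from designated non-neighbours. For the lemma as stated this is immaterial, and even in the downstream use (Remark~\ref{rem:blocking paths}) paths through $C$ remain harmless --- condition (a) forces the predecessor $w(k-1)$ of any $w(k) \in C$ to be non-adjacent to $w(k)$, so parts supported on $C$ cannot shuffle past it and merge with $s_k$ --- but that is an extra verification your route quietly makes necessary.
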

\begin{proof}
Let $C=\{v(1),\ldots,v(m)\}\subset V$ be finite and proper, that is, $V\setminus C \neq \emptyset$. If $(v,u) \in E$ for all $v \in C, u \in V\setminus C$, then we would get a contradiction to $\Gamma$ being coconnected. Thus there exists $w(1) \in V\setminus C$ such that $(v(k),w(1)) \notin E$ for some $1 \leq k\leq m$. Without loss of generality, we can assume $k=1$. Since $\Gamma$ is coconnected, we can choose $w'(k) \in V$ for $2 \leq k\leq m$ such that $(v(k),w'(k)) \notin E$. Again by coconnectedness, there exists a finite path $w(1),\ldots,w(n)$ in $\Gamma^{\text{opp}}$ that visits every $w'(k), 2 \leq k \leq n$. This is a blocking path for $C$, and since $\Gamma^{\text{opp}}$ is connected, we can attach to this blocking path a path leading to any prescribed vertex without loosing the blocking property for $C$.
\end{proof}

\begin{remark}\label{rem:blocking paths}
Let $\Gamma=(V,E)$ be a graph and $(S_v)_{v \in V}$ a family of right LCM semigroups. Suppose $w(1),\ldots,w(n)$ is a blocking path for some nonempty $C$, and we can choose $s_n,t_n \in S_{w(n)}^{\phantom{*}}\setminus S_{w(n)}^*$. Then for all $s_0,t_0$ whose reduced expressions only contain parts from vertex semigroups of vertices in $C$, and all $s_k,t_k \in S_{w(k)}, 1 \leq k <n$, we have $\ell(s_0s_1\cdots s_n) = \ell(s_0)+n$ and $s_0s_1\cdots s_n \perp t_0t_1\cdots t_n, \text{ unless } s_k=t_k \text{ for } 0 \leq k<n \text{ and } s_n\not\perp t_n$. Thus blocking paths allow for the construction of shuffle inert elements in graph products, which turns out to be quite useful.
\end{remark}

\section{The internal structure of graph products}\label{sec:structure}
In this section we show that many of the properties of $S_\Gamma$ that are of interest to us, e.g.~in connection with \cite{ABLS}, can be understood from a study of the corresponding graph products for the coconnected components $(\Gamma_i)_{i \in I}$ of $\Gamma$. The reason is $S_\Gamma = \bigoplus_{i \in I} S_{\Gamma_i}$ and the following list of straightforward observations, where we write $s=\oplus_{i \in I} s_i$ for $s \in \bigoplus_{i \in I} S_i$:

\begin{proposition}\label{prop:reg properties for direct sums}
Let $(S_i)_{i \in I}$ be a family of right LCM semigroups. Then $S:= \bigoplus_{i \in I}S_i$ has the following features:
\begin{enumerate}[(i)]
\item $S^* = \bigoplus_{i \in I} S_i^*$, $S_c = \bigoplus_{i \in I} (S_i)_c$, and $S_{ci}' = \bigoplus_{i \in I} (S_i)_{ci}'$.
\item $s,t \in S$ are core related if and only if $s_i$ and $t_i$ are core related in $S_i$ for all $i \in I$. 
\item The following statements hold for $S$ if and only if their analogues hold for all $S_i$: $S$ is core factorable, $S_{ci}\subset S$ is $\cap$-closed, $\alpha\colon S_c \curvearrowright S/_\sim, a.[s]:= [as]$ is faithful, and $S$ has finite propagation.
\item The action $\alpha\colon S_c \curvearrowright S/_\sim, a.[s]:= [as]$ is almost free if and only if one of the following conditions holds:
	\begin{enumerate}[(a)]
	\item $S_i$ is left reversible for all $i \in I$, that is, $S=S_c$ so that $S/\sim$ is a singleton.
	\item There exists a unique $i \in I$ such that $S_i$ is not left reversible, $\alpha_i\colon (S_i)_c \curvearrowright S_i/_\sim$ is almost free, and $S_j$ is left reversible for all $j \in I\setminus\{i\}$.
	\end{enumerate}
\end{enumerate}
\end{proposition}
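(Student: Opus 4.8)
The plan is to verify each of the four clauses (i)--(iv) by unwinding the relevant definition coordinatewise and exploiting the fact that orthogonality in a direct sum $S = \bigoplus_{i \in I} S_i$ decouples across coordinates. The key structural observation I would establish first, and use repeatedly, is that for $s = \oplus_i s_i$ and $t = \oplus_i t_i$ in $S$ we have $sS \cap tS \neq \emptyset$ if and only if $s_i S_i \cap t_i S_i \neq \emptyset$ for every $i \in I$; equivalently, $s \perp t$ iff $s_i \perp t_i$ for some $i$. This follows because a common right multiple of $s$ and $t$ in the direct sum is precisely a tuple of common right multiples in each coordinate, and conversely. From this one also reads off that $sS \cap tS = rS$ with $r = \oplus_i r_i$ where $s_i S_i \cap t_i S_i = r_i S_i$ in each coordinate. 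I would state this as the opening paragraph of the proof since every subsequent clause reduces to it.

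For clause (i), invertibility in a direct sum is coordinatewise, giving $S^* = \bigoplus_i S_i^*$ immediately. For $S_c$, an element $a = \oplus_i a_i$ satisfies $a \not\perp s$ for all $s \in S$ iff in each coordinate $a_i \not\perp s_i$ for all $s_i \in S_i$ (using the orthogonality criterion above and the fact that one can choose the other coordinates freely); this gives $S_c = \bigoplus_i (S_i)_c$. The identity $S_{ci}' = \bigoplus_i (S_i)_{ci}'$ requires slightly more care: one shows that $s \in S_{ci}'$ iff every factorisation $s = ta$ with $a \in S_c$ forces $a \in S^*$, which coordinatewise means each $s_i \in (S_i)_{ci}'$. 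Clause (ii) is then routine: the core relation $s \sim t$ means $sa = tb$ for some $a,b \in S_c = \bigoplus_i (S_i)_c$, and since both the semigroup operation and $S_c$ decompose coordinatewise, this holds iff $s_i \sim t_i$ in $S_i$ for all $i$.

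Clause (iii) is handled property by property. Core factorability $S = S_{ci}^1 S_c$ and the $\cap$-closedness of $S_{ci}$ both follow by combining the coordinatewise description of products, of $S_c$ (clause (i)), and of intersections of principal right ideals (opening paragraph); the faithfulness of $\alpha$ and finite propagation likewise transfer since the action and the length/propagation data decompose across $I$. For each of these I would check the ``only if'' direction by restricting to a single coordinate (embedding $S_i \hookrightarrow S$ via the identity in the other slots) and the ``if'' direction by assembling coordinatewise witnesses; the ``$s \sim t$ iff coordinatewise'' equivalence from clause (ii) identifies $S/_\sim$ with $\prod_i S_i/_\sim$ and makes the statements about $\alpha$ precise.

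The main obstacle, and the place I would spend the most effort, is clause (iv): characterising when the action $\alpha\colon S_c \curvearrowright S/_\sim$ is \emph{almost free}. Here the subtlety is that almost freeness is not a coordinatewise-conjunctive property, because the orbit space $S/_\sim \cong \prod_i S_i/_\sim$ collapses to a point exactly on the left-reversible coordinates (where $S_i = (S_i)_c$ and $S_i/_\sim$ is a singleton), while the action on the remaining factors must cooperate. The plan is to show that if two or more coordinates fail to be left reversible, then one can exhibit a core element acting with a nontrivial stabiliser on some class, contradicting almost freeness -- essentially because the $S_c$-action on a product of more than one nontrivial orbit space admits too much room for partial triviality. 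Conversely, if all but at most one coordinate is left reversible, almost freeness of $\alpha$ reduces to that of $\alpha_i$ on the single nontrivial factor, with the reversible factors contributing trivially. I expect the careful bookkeeping to hinge on the precise definition of almost freeness (which is invoked from the cited references rather than spelled out here), so I would first pin that definition down and then formalise the dichotomy between cases (a) and (b) accordingly.
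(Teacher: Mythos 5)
Your handling of clauses (i)--(iii) is correct, and it is essentially the argument the paper has in mind: the paper offers no proof at all (the proposition is introduced as a list of straightforward observations), and the coordinatewise decoupling you isolate at the start --- $s \perp t$ in $\bigoplus_{i \in I} S_i$ if and only if $s_i \perp t_i$ for some $i$, with generators of intersections of principal right ideals assembled coordinate by coordinate --- is exactly the right foundation. Your reductions of $S^*$, $S_c$, $S_{ci}'$, the core relation, core factorability, $\cap$-closedness and faithfulness to the individual factors all go through as sketched.

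Clause (iv) is where the proposal breaks down, and the failure is one of substance, not bookkeeping. The notion of almost freeness actually in play (see how it is used in the paper's proof of Theorem~\ref{thm:reg properties for coconnected graph products}~(v)) is that every $a \in S_c \setminus \{1\}$ has finitely many fixed points in $S/_\sim$. With this definition, both of your claimed implications are false. First, ``two or more non-left-reversible coordinates implies one can exhibit a core element with infinitely many fixed points'' fails because there may be no core element to exhibit: for $S = \IF_2^+ \oplus \IF_2^+$ one has $S_c = \{1\}$, so $\alpha$ is vacuously almost free even though neither (a) nor (b) holds. Second, the claim that ``the reversible factors contribute trivially'' fails for the opposite reason: a left reversible factor with nontrivial core lies inside $S_c$ and acts trivially on $S/_\sim$. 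Concretely, $S = \IF_2^+ \oplus \N$ satisfies (b) --- the unique non-left-reversible factor $\IF_2^+$ has trivial core, so its action is vacuously almost free, and $\N$ is left reversible --- yet the generator of $\N \subset S_c$ fixes every class in $S/_\sim \cong \IF_2^+$, which is infinite, so $\alpha$ is not almost free.

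Note that both examples are right-angled Artin monoids, and the paper's own Corollary~\ref{cor:reg prop for ra Artin monoids}~(v) (almost freeness holds iff $V_u \in \{\emptyset,V\}$) classifies the first as almost free and the second as not --- in each case the opposite of what clause (iv), read literally, predicts. So no amount of care in pinning down the definition will produce a proof of (iv) as stated; the correct dividing line is governed by where the core is nontrivial, not by left reversibility alone. What is actually true is: $\alpha$ is almost free precisely when $S = S_c$, or $S_c = \{1\}$, or all $S_j$ with $j \neq i$ are trivial and $\alpha_i$ is almost free for the single remaining factor $i$. Your instinct to fix the definition before formalising the dichotomy was sound, but carrying that step out honestly leads to this corrected statement rather than to the one you set out to prove.
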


In view of the direct sum decomposition for graph products over the coconnected components, we need to understand the behaviour of the graph product in the case of a coconnected graph with at least two vertices. To do this, we will need to consider a variant of the action $\alpha$ for $S^*$, i.e.~$\alpha^*\colon S^*\curvearrowright S/S^*, x.[s] := [xs]$. Also, we will assume that all vertex semigroups $S_v, v \in V$ are nontrivial in order to avoid pathological cases. For instance, if $\Gamma$ is the union of a complete graph and an isolated vertex $v$, and $S_v$ is trivial, then the graph product will be the direct sum of the right LCM semigroups attached to the vertices of the complete graph, even though the original graph was larger and coconnected.

\begin{thm}\label{thm:reg properties for coconnected graph products}
If $\Gamma=(V,E)$ is coconnected, $\lvert V\rvert \geq 2$, and $(S_v)_{v \in V}$ is a family of nontrivial right LCM semigroups, then the following assertions hold:
\begin{enumerate}[(i)]
\item $S_\Gamma^*$ is the graph product of $(S_v^*)_{v \in V}$, $(S_\Gamma^{\phantom{*}})_c=S_\Gamma^*$, and $(S_\Gamma^{\phantom{*}})_{ci} = S_\Gamma^{\phantom{*}} \setminus S_\Gamma^*$.
\item For $s,t \in S_\Gamma$, $s \sim t$ is equivalent to $s \in tS_\Gamma^*$.
\item $S_\Gamma$ is core factorable and $(S_\Gamma)_{ci}\subset S_\Gamma$ is $\cap$-closed.
\item The action $\alpha\colon S_\Gamma^*\curvearrowright S_\Gamma/_\sim$ is faithful if and only if $S_\Gamma$ is not a group.
\item The action $\alpha$ is almost free if and only if 
	\begin{enumerate}[(a)]
	\item $\alpha_v^*\colon S_v^*\curvearrowright S_v^{\phantom{*}}/S_v^*$ is almost free for every isolated vertex $v \in V$, and
	\item for every connected component $U\subset V$ with $\lvert U\rvert\geq 2$, either $S_u$ is a group for all $u \in U$ or $S_u^*$ is trivial for all $u \in U$.
	\end{enumerate}
\end{enumerate}
\end{thm}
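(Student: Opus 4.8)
The plan is to reduce all five assertions to a careful understanding of the invertible group $S_\Gamma^*$ and the core $(S_\Gamma)_c$, since parts (ii)--(iv) then follow almost formally from (i). The only genuinely combinatorial inputs are the reduced expression theorem \cite{FK}*{Theorem~1.1}, the existence of blocking paths from Lemma~\ref{lem:blocking paths in coconnected graphs}, and the orthogonality principle recorded in Remark~\ref{rem:blocking paths}.

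For (i) I would first show that the canonical homomorphism from the graph product of $(S_v^*)_{v\in V}$ into $S_\Gamma$ is injective with image $S_\Gamma^*$. Injectivity and the inclusion of the image in $S_\Gamma^*$ are immediate, since a product of invertible letters is invertible and invertible letters still count as letters for reduced expressions. For the reverse inclusion I would argue that an element is invertible only if every letter of its reduced expression is invertible: writing $st=1$ and comparing reduced expressions, the length function $\ell$ forces a total cancellation which, by essential uniqueness of reduced expressions, can only be achieved by pairing letters at equal vertices and inverting them inside the respective $S_v$. Next, $S_\Gamma^*\subseteq (S_\Gamma)_c$ holds for any right LCM monoid. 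The heart of (i) is the converse $(S_\Gamma)_c\subseteq S_\Gamma^*$: given a non-invertible $a$ with $\supp a=:C$, I must produce $s$ with $a\perp s$. When $S_\Gamma$ is a group there is nothing to prove, so I may assume some vertex semigroup has a non-invertible element. I would then take a blocking path for $C$ ending at such a vertex and use Remark~\ref{rem:blocking paths} to manufacture an element $s$ whose right multiples have reduced expressions incompatible with those of the right multiples of $a$; concretely, the set of possible initial vertices of $aS_\Gamma$ is confined to $C$ together with the common $\Gamma$-neighbours of $C$, while the blocking path lets me place $s$ so that no right multiple of $s$ can share this initial behaviour. This orthogonality is the \emph{main obstacle} of the whole theorem and is exactly what the blocking path machinery is designed to deliver. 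Granting it, $(S_\Gamma)_c=S_\Gamma^*$, and then $(S_\Gamma)_{ci}=S_\Gamma\setminus S_\Gamma^*$ follows because, once $(S_\Gamma)_c=S_\Gamma^*$, the defining condition for core irreducibility becomes vacuous.

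Assertions (ii) and (iii) I would deduce formally from (i). Since $(S_\Gamma)_c=S_\Gamma^*$, the relation $sa=tb$ with $a,b\in (S_\Gamma)_c$ is equivalent to $s\in tS_\Gamma^*$, giving (ii). For (iii), core factorability $S_\Gamma=(S_\Gamma)_{ci}^1(S_\Gamma)_c$ is trivial once $(S_\Gamma)_c=S_\Gamma^*$ and $(S_\Gamma)_{ci}=S_\Gamma\setminus S_\Gamma^*$, and $\cap$-closedness is immediate because $sS_\Gamma\cap tS_\Gamma=rS_\Gamma$ with $r$ invertible would force $sS_\Gamma=S_\Gamma$, contradicting $s\in(S_\Gamma)_{ci}$.

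For (iv) and (v) I would exploit that non-adjacent vertices, and in particular vertices in different connected components of $\Gamma$, contribute non-commuting letters, so that $S_\Gamma$ decomposes as a free product over the connected components $U$ of $\Gamma$, with $S_\Gamma^*$ the corresponding free product of the invertible groups $S_U^*$ of the component graph products. For (iv), if $S_\Gamma$ is a group then $S_\Gamma/_\sim$ is a point and $\alpha$ is trivial, hence not faithful; conversely, if some vertex semigroup is not a group, then for $1\neq x\in S_\Gamma^*$ I would detect $x$ by letting it act on a non-invertible letter attached to a blocking path for $\supp x$, so that $xs$ and $s$ have incompatible reduced expressions and hence $xs\not\sim s$. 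The subtlest part is (v): the internal edges of a connected component $U$ with $\lvert U\rvert\geq 2$ produce commuting pairs, and a nontrivial invertible $x\in S_u^*$ commuting with a non-invertible $a\in S_{u'}$ satisfies $xa=ax\in aS_\Gamma^*$, so $x$ fixes $[a]$, and in fact a large set of classes; this is what forces the dichotomy in (v)(b). I would therefore compute the stabilisers of $\alpha$ factor by factor: for a singleton component (isolated vertex $v$) there are no internal edges and the stabiliser behaviour is exactly that of $\alpha_v^*$, giving (v)(a); for a component with at least two vertices, the commuting phenomenon above shows that almost freeness can hold only if either all $S_u$ are groups (so the factor is left reversible and contributes only the fixed class $[1]$) or all $S_u^*$ are trivial (so the factor contributes no nontrivial invertibles). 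The converse, that these conditions suffice, I would obtain by assembling the free product action and checking that, away from the single class $[1]$ of each group factor, distinct free factors cannot commute and hence cannot create stabilisers, so that the only obstructions to almost freeness are the local ones captured by (a) and (b). As in Proposition~\ref{prop:reg properties for direct sums}, the principal difficulty throughout is the orthogonality and stabiliser bookkeeping in reduced expressions, for which Lemma~\ref{lem:blocking paths in coconnected graphs} and Remark~\ref{rem:blocking paths} are the essential tools.
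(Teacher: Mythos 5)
Your overall architecture --- establish (i), deduce (ii) and (iii) formally, then handle (iv) and (v) by normal-form arguments --- matches the paper's, your deductions of (ii) and (iii) from (i) are correct, and the free-product decomposition over connected components is a nice organising idea the paper does not use explicitly. However, the central step, which you rightly identify as the inclusion $(S_\Gamma)_c \subseteq S_\Gamma^*$, is not actually established by your argument, for two concrete reasons. First, Lemma~\ref{lem:blocking paths in coconnected graphs} produces blocking paths only for \emph{proper} finite subsets of $V$, whereas you request one for $C=\supp a$ (and, in (iv), for $\supp x$). When $\Gamma$ is finite these supports can be all of $V$ --- e.g.\ $\Gamma$ consisting of two non-adjacent vertices and $a=a_{u_1}a_{u_2}$ --- and then no blocking path for $C$ exists, so your construction cannot start. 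The paper avoids this: in (i) it locates the first non-invertible letter $s_{v(m)}$, picks a \emph{single} vertex $w$ non-adjacent to $v(m)$ (which coconnectedness provides), shows $s_{v(m)}\cdots s_{v(n)} \perp t\,s_{v(m)}\cdots s_{v(n)}$ for $t\in S_w\setminus\{1\}$, and disposes of the invertible prefix by left cancellation; in (iv) and (v) it blocks only the singleton $\{u(m)\}$, which is always proper since $\lvert V\rvert\geq 2$.

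Second, even when the blocking path exists, the orthogonality you need is not what your cited tools deliver. Remark~\ref{rem:blocking paths} compares two \emph{path-products} $s_0s_1\cdots s_n$ and $t_0t_1\cdots t_n$, both required to end in a non-invertible letter at $w(n)$; instantiating $t_0=a$ and $t_1=\cdots=t_{n-1}=1$ it yields $s_1\cdots s_n \perp a t_n$, which is implied by, but does not imply, the statement $s_1\cdots s_n\perp a$ that you need. Your fallback via initial vertices also has two holes: Definition~\ref{def:blocking path} does not forbid $w(1)$ from being adjacent to \emph{every} vertex of $C$, so right multiples of $a$ and of $s$ can share their initial vertex; and invertible letters of $a$ can be cancelled by letters of $x$ inside $ax$, so the initial vertices of elements of $aS_\Gamma$ are not confined to $C$ and its common neighbours. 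The paper's relative-order argument anchored at the first non-invertible letter is precisely what closes these holes. Finally, in (v) your sufficiency argument conflates commutation with stabilisation: a fixed class means $xs=sy$ for \emph{some} $y\in S_\Gamma^*$, not $xs=sx$, so ``distinct free factors cannot commute'' does not by itself rule out fixed points --- one needs $xs\perp s$, which is how the paper argues; and in the necessity of (b) you must still produce an \emph{adjacent} pair with $S_u^*\neq\{1\}$ and $S_{u'}$ not a group, which requires the paper's walk along a path inside $U$ using nontriviality of all vertex semigroups. All of these gaps are repairable with the paper's techniques, but as written the key orthogonality steps fail.
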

\begin{proof}
For (i), let $s_{v(1)}s_{v(2)}\cdots s_{v(n)}$ be a reduced expression for $s\in S_\Gamma$. Clearly, $s$ is invertible in $S_\Gamma$ if and only if $s_{v(k)} \in S_{v(k)}^*$ for all $k$. The homomorphism from the graph product of $(S_v^*)_{v \in V}$ to $S_\Gamma$ (resulting from the universal property) is bijective, so that $S_\Gamma^*$ is the graph product with respect to $\Gamma$ and $(S_v^*)_{v \in V}$. Now assume that there is $1 \leq m \leq n$ such that $s_{v(k)} \in S_{v(k)}^*$ for $1 \leq k < m$ but $s_{v(m)} \notin S_{v(m)}^*$. Since $\Gamma$ is coconnected, there is $w \in V$ with $w \neq v(m)$ and $(v(m),w) \notin E$. For every $t \in S_w\setminus\{1\}$, we thus have $s_{v(m)}s_{v(m+1)}\cdots s_{v(n)} \perp ts_{v(m)}s_{v(m+1)}\cdots s_{v(n)}$. By left cancellation, this yields $s \perp s_{v(1)}s_{v(2)}\cdots s_{v(m-1)}ts_{v(m)}s_{v(m+1)}\cdots s_{v(n)}$, so that $s \notin (S_\Gamma)_c$. This proves $(S_\Gamma)_c=S_\Gamma^*$, and the claims $(S_\Gamma^{\phantom{*}})_{ci} = S_\Gamma^{\phantom{*}} \setminus S_\Gamma^*$, (ii), and (iii) are immediate consequences of this.

For (iv), we note that $\alpha$ is not faithful if $S_v$ is a group for all $v \in V$ because then $S_\Gamma/_\sim$ is a singleton while $S_\Gamma^*=S_\Gamma$ is nontrivial. So let us assume that there exists $v \in V$ with $S_v^{\phantom{*}} \neq S_v^*$. Every $x \in S_\Gamma^*\setminus\{1\}$ has a reduced expression $x_{u(1)}x_{u(2)}\cdots x_{u(m)}$ with $x_{u(k)} \in S_{u(k)}^*\setminus\{1\}$. Since $\Gamma$ is coconnected and $\lvert V\rvert \geq 2$, there exists a blocking path $w(1),\ldots,w(n)$ for $\{u(m)\}$ with $w(n)=v$, see Lemma~\ref{lem:blocking paths in coconnected graphs}. Choose $s_{w(k)} \in S_{w(k)}\setminus\{1\}$ for $1 \leq k <n$ and $s_{w(n)} \in S_{w(n)}^{\phantom{*}}\setminus S_{w(n)}^*$. Then $s:= s_{w(1)}s_{w(2)}\cdots s_{w(n)} \in S_\Gamma$ satisfies $x_{u(m)}s \perp s$. If $1 \leq k\leq m-1$ satisfies $(u(k),u(\ell)) \in E$ for all $k < \ell \leq m$, then $(u(k),u(m)) \in E$ in particular implies $u(k) \neq v(1)$. For the same reason, $(u(k),v(1)) \in E$ implies $u(k) \neq v(2)$, and so on. Thus $x_{u(1)}x_{u(2)}\cdots x_{u(m)} s_{w(1)}s_{w(2)}\cdots s_{w(n)}$ is a reduced expression for $xs$ and we conclude that orthogonality is not destroyed by $x_{u(1)}x_{u(2)}\cdots x_{u(m-1)}$, i.e.~$xs \perp s$. In particular, $[xs] \neq [s]$ and therefore $\alpha$ is faithful.

To prove (v), we first observe that (a) is necessary for $\alpha$ to be almost free: If $v \in V$ is isolated, then $[xs]=[s]$ for $x \in S_v^*\setminus\{1\}$ and $[s] \in S_\Gamma$ implies $s \in S_v$. Suppose next that (b) does not hold, i.e.~there exists a connected component $U\subset V$ of $\Gamma$ with $\lvert U\rvert\geq 2$ such that there are $u,v \in U$ with $S_v^{\phantom{*}}\neq S_v^*$ and $S_u^* \neq \{1\}$. If $u=v$, then we can pick $w \in U\setminus\{v\}$ with $(v,w) \in E$. If there is $x \in S_{w}^*\neq \{1\}$, then $[xs]=[sx]=[s]$ for all $s \in S_v$, and since $S_v^{\phantom{*}}/S_v^*$ is infinite, $\alpha$ fails to be almost free for $x$. On the other hand, $S_w$ is nontrivial, so $S_w^*=\{1\}$ implies that $S_w^{\phantom{*}}/S_w^*$ is infinite, and then almost freeness fails for every $x \in S_{v}^*\neq \{1\}$. 

Now suppose $u \neq v$. As $U$ is connected, we can find a path $v(0):=u,v(1),\ldots,v(n):=v$ from $u$ to $v$ inside $U$, i.e.~$(v(k),v(k+1)) \in E$ for all $0\leq k < n$. Then there exists $0 \leq k <n$ such that $S_{v(k)}^* \neq \{1\}$ and $S_{v(k+1)}^{\phantom{*}}\neq S_{v(k+1)}^*$, and we can apply the above argument to deduce that $\alpha$ is not almost free. We have thus proven that almost freeness of $\alpha$ implies (a) and (b).

Conversely, assume that (a) and (b) hold. If $S_\Gamma$ is a group, then there is nothing to show, so we may suppose that $S_\Gamma^{\phantom{*}} \neq S_\Gamma^*$. Let $x \in S_\Gamma^*\setminus\{1\}$ be presented by a reduced expression $x_{u(1)}x_{u(2)}\cdots x_{u(m)}$ with $x_{u(k)} \in S_{u(k)}^*\setminus\{1\}$. Fix $s \in S_\Gamma^{\phantom{*}} \setminus S_\Gamma^*$ with reduced expression $s_{v(1)}\cdots s_{v(n)}$, $s_{v(k)} \in S_{v(k)}$. Let $1 \leq j \leq n$ be the smallest number such that $s_{v(j)} \notin S_{v(j)}^*$. By (b), $j$ is invariant under shuffling and we know that $v(j)$ does not belong to the connected component of any $u(k)$ that emits an edge. Therefore, $xs \perp s$ and then $[xs] \neq [s]$, unless $j=m=1$ and $u(1)=v(1)=v$ for some isolated vertex $v \in V$. In this case, (a) says that there are only finitely many fixed points for $x$ in $S_v^{\phantom{*}}/S_v^*$. Thus $\alpha$ is almost free if (and only if) (a) and (b) hold.
\end{proof}

\begin{remark}\label{rem:finite propagation}
The graph product $S_\Gamma$ for a coconnected graph $\Gamma$ with $\lvert V\rvert \geq 2$ has finite propagation if $S_v^*$ is a finite group for all $v \in V$.
\end{remark}

Let us now summarise what Proposition~\ref{prop:reg properties for direct sums} and Theorem~\ref{thm:reg properties for coconnected graph products} imply for graph products of right LCM semigroups.

\begin{thm}\label{thm:reg prop for graph products}
Let $\Gamma=(V,E)$ be a graph and $(S_v)_{v \in V}$ a family of nontrivial right LCM semigroups. Then:
\begin{enumerate}[(i)]
\item $S_\Gamma^* = \bigoplus_{v \in V_u} S_v^* \oplus \bigoplus_{i \in I_2} S_{\Gamma_i}^*$.\vspace*{1mm}
\item $(S_\Gamma)_c = \bigoplus_{v\in V_u} (S_v)_c \oplus \bigoplus_{i \in I_2}S_{\Gamma_i}^*$.\vspace*{1mm}
\item $(S_\Gamma)_{ci}'= \bigoplus_{v\in V_u} (S_v)_{ci}' \oplus \bigoplus_{i \in I_2} S_{\Gamma_i}$.
\item Two elements $s,t \in S_\Gamma$ are core related if and only if $s_v \sim_v t_v$ for all $v \in V_u$ and $s_i \in t_iS_{\Gamma_i}^*$ for all $i \in I_2$.
\item $S_\Gamma$ is core factorable if and only if $S_v$ is core factorable for every $v \in V_u$.
\item $(S_\Gamma)_{ci} \subset S_\Gamma$ is $\cap$-closed if and only if $(S_v)_{ci} \subset S_v$ is $\cap$-closed for every $v \in V_u$.
\item The action $\alpha\colon (S_\Gamma)_c \curvearrowright S_\Gamma/_\sim$ is faithful if and only if $\alpha_v\colon (S_v)_c \curvearrowright S_v/_\sim$ is faithful for every $v \in V_u$, and for every $i \in I_2$ there exists $w \in V_i$ such that $S_w$ is not a group.
\item The action $\alpha\colon (S_\Gamma)_c \curvearrowright S_\Gamma/_\sim$ is almost free if and only if one of the following conditions holds: 
	\begin{enumerate}[(a)]
	\item $(S_v)_c = \{1\}$ for all $v \in V_u$ and $S_w^* = \{1\}$ for all $w \in V\setminus V_u$.
	\item $(S_v)_c \neq \{1\}$ for a unique $v \in V_u$ with $\alpha_v\colon (S_v)_c \to S_v/_\sim$ almost free, while $S_w=(S_w)_c$ for all $w \in V_u\setminus\{v\}$ and $S_{w'}^{\phantom{*}}=S_{w'}^*$ for all $w' \in V\setminus V_u$.
	\item $S_{\Gamma_i}^* \neq \{1\}$ for a unique $i \in I_2$ with $\alpha_i\colon S_{\Gamma_i}^* \to S_{\Gamma_i}^{\phantom{*}}/_\sim$ almost free, while $S_w=(S_w)_c$ for all $w \in V_u$ and $S_{\Gamma_j}^{\phantom{*}}=S_{\Gamma_j}^*$ for all $j \in I_2\setminus \{i\}$.
	\end{enumerate}
\item $S_\Gamma$ has finite propagation if $S_v$ has finite propagation for every $v \in V_u$ and $S_w^*$ is a finite group for all $w \in V\setminus V_u$. 
\end{enumerate}
\end{thm}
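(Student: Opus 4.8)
The plan is to reduce the entire statement to the two results already in hand: Proposition~\ref{prop:reg properties for direct sums} for direct sums and Theorem~\ref{thm:reg properties for coconnected graph products} for coconnected graph products with at least two vertices. The bridge is the decomposition $S_\Gamma=\bigoplus_{v\in V_u}S_v\oplus\bigoplus_{i\in I_2}S_{\Gamma_i}$ recorded in Subsection~\ref{subsec:graph products}: the singleton coconnected components are exactly the universal vertices, since by the recovery formula for $E$ a component $\{v\}$ forces $v$ to be adjacent to every other vertex, and conversely a universal vertex splits off, while the components with $\lvert V_i\rvert\geq2$ are those indexed by $I_2$. On the singleton summands $S_v$ ($v\in V_u$) no additional structure is available, whereas each $S_{\Gamma_i}$ with $i\in I_2$ is a graph product over a coconnected graph on at least two vertices, so Theorem~\ref{thm:reg properties for coconnected graph products} applies to it. I would feed this decomposition into Proposition~\ref{prop:reg properties for direct sums}, which turns each assertion into a conjunction of conditions on the individual summands, and then simplify the $I_2$-summands by Theorem~\ref{thm:reg properties for coconnected graph products}.

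For the routine parts this substitution is immediate. Theorem~\ref{thm:reg properties for coconnected graph products}(i) identifies $(S_{\Gamma_i})_c=S_{\Gamma_i}^*$, $(S_{\Gamma_i})_{ci}=S_{\Gamma_i}\setminus S_{\Gamma_i}^*$ and the graph-product form of $S_{\Gamma_i}^*$; inserting these into Proposition~\ref{prop:reg properties for direct sums}(i) yields (i) and (ii) here, and using $(S_{\Gamma_i})_{ci}'=(S_{\Gamma_i}\setminus S_{\Gamma_i}^*)\cup S_{\Gamma_i}^*=S_{\Gamma_i}$ gives (iii). Theorem~\ref{thm:reg properties for coconnected graph products}(ii) rewrites the componentwise core relation of Proposition~\ref{prop:reg properties for direct sums}(ii) into the mixed description (iv), the $I_2$-coordinates becoming $s_i\in t_iS_{\Gamma_i}^*$. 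Since Theorem~\ref{thm:reg properties for coconnected graph products}(iii) makes each $S_{\Gamma_i}$ automatically core factorable and $\cap$-closed, and Theorem~\ref{thm:reg properties for coconnected graph products}(iv) makes $\alpha_i$ faithful precisely when $S_{\Gamma_i}$ is not a group, i.e.\ when some $S_w$ with $w\in V_i$ is not a group, Proposition~\ref{prop:reg properties for direct sums}(iii) delivers (v), (vi) and (vii), the $I_2$-summands imposing no constraint in (v), (vi) and the non-group requirement in (vii). For (ix) I would combine the finite-propagation clause of Proposition~\ref{prop:reg properties for direct sums}(iii) with Remark~\ref{rem:finite propagation}, which grants finite propagation of each $S_{\Gamma_i}$ as soon as the vertex unit groups $S_w^*$ ($w\in V_i$) are finite; because Remark~\ref{rem:finite propagation} is only a sufficient condition, (ix) is correspondingly stated as an implication.

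The substantive part is (viii), which is where I expect the main difficulty. The starting point is the dichotomy of Proposition~\ref{prop:reg properties for direct sums}(iv): almost freeness of $\alpha$ on a direct sum is governed by the left reversibility of the summands together with the almost freeness of the action on the single summand (if any) that fails to be left reversible. Translating left reversibility summand by summand via Theorem~\ref{thm:reg properties for coconnected graph products}(i)—$S_v$ ($v\in V_u$) is left reversible iff $S_v=(S_v)_c$, and $S_{\Gamma_i}$ ($i\in I_2$) is left reversible iff it is a group, equivalently iff every $S_w$, $w\in V_i$, is a group—and expanding the almost freeness of a distinguished multi-vertex summand through Theorem~\ref{thm:reg properties for coconnected graph products}(v), one is led to the three cases (a), (b), (c), which record, respectively, the situation where no summand sustains a nontrivial action, the situation where the single active summand is a universal vertex $v$ with $\alpha_v$ almost free, and the situation where it is one of the $S_{\Gamma_i}$ with $\alpha_i$ almost free. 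The crux, and the step I expect to be most delicate, is the bookkeeping that disentangles left reversibility from triviality of the core: a left reversible but nontrivial summand still carries core elements that act as the identity on the corresponding singleton factor of $S_\Gamma/_\sim$, so these do not obstruct almost freeness, and one must determine precisely which summands may retain a nontrivial core (the reversible universal vertices with $S_w=(S_w)_c$ and the group components $S_{\Gamma_j}=S_{\Gamma_j}^*$) and which are forced to be trivial, then match the resulting conditions against the clean forms recorded in (a), (b), (c), taking care of the degenerate situation in which $S_\Gamma/_\sim$ collapses to a point.
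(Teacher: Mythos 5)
Your overall strategy is exactly the paper's: Theorem~\ref{thm:reg prop for graph products} is stated there with no separate proof, being introduced as a summary of what Proposition~\ref{prop:reg properties for direct sums} and Theorem~\ref{thm:reg properties for coconnected graph products} yield once one writes $S_\Gamma=\bigoplus_{v\in V_u}S_v\oplus\bigoplus_{i\in I_2}S_{\Gamma_i}$, and your identification of the singleton coconnected components with the universal vertices is the correct bridge. Your treatment of (i)--(vii) and (ix) is correct, including the finer points: $(S_{\Gamma_i})_{ci}'=S_{\Gamma_i}$ for $i\in I_2$, faithfulness of $\alpha_i$ amounting to some $S_w$, $w\in V_i$, not being a group, and (ix) remaining a one-way implication because Remark~\ref{rem:finite propagation} is only a sufficient condition.

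For (viii), however, the plan as written has a concrete weak point: the ``dichotomy of Proposition~\ref{prop:reg properties for direct sums}(iv)'' cannot literally serve as the starting point, because that proposition, taken at face value, contradicts case (a) of (viii). Consider $S_\Gamma=\IF_2^+\oplus\IF_2^+$ (the right-angled Artin monoid of the $4$-cycle, whose coconnected components are two edgeless two-vertex graphs). Here $(S_\Gamma)_c=\{1\}$, so the action $\alpha$ is vacuously almost free and (viii)(a) holds --- in agreement with Corollary~\ref{cor:reg prop for ra Artin monoids}(v) --- yet neither clause of Proposition~\ref{prop:reg properties for direct sums}(iv) is satisfied, since both summands fail to be left reversible. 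So translating that dichotomy summand by summand can never produce case (a); one must either supplement it with the separate observation that a trivial core forces almost freeness, or redo the fixed-point computation for direct sums from scratch. Relatedly, your guiding principle that core elements acting as the identity ``do not obstruct almost freeness'' is indeed what makes (b) and (c) come out as stated (and matches the paper's remark following the theorem that these cases correspond to the action collapsing onto $(S_v)_c$ resp.\ $S_{\Gamma_i}/_\sim$), but it must be justified directly from the notion of almost freeness used in \cite{ABLS}; under a naive reading where every nontrivial element of $(S_\Gamma)_c$ is required to have finitely many fixed points, a nontrivial core element of a left reversible summand fixes all of $S_\Gamma/_\sim$, and your claim would fail. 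In short, the delicate step you flag is not bookkeeping downstream of Proposition~\ref{prop:reg properties for direct sums}(iv); it is a refinement (in the trivial-core case, a correction) of that proposition, and the proof of (viii) is incomplete until that analysis is actually carried out.
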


The conditions for almost freeness in Theorem~\ref{thm:reg prop for graph products} correspond to $(S_\Gamma)_c=\{1\}$, $(S_\Gamma)_c= (S_v)_c$, and $S_\Gamma^{\phantom{*}}/_\sim \cong = S_{\Gamma_i}/_\sim$, respectively. Hence they are quite restrictive, and we view this as an indication that finite propagation might be much more useful for graph products than almost freeness of $\alpha$, see \cite{ABLS}*{Theorem~4.2(2)} for details.

When applied to right-angled Artin monoids, Theorem~\ref{thm:reg prop for graph products} takes a simpler form:

\begin{corollary}\label{cor:reg prop for ra Artin monoids}
For a graph $\Gamma=(V,E)$, the right-angled Artin monid $A_\Gamma^+$ satisfies:
\begin{enumerate}[(i)]
\item $(A_\Gamma^+)^* = \{1\}, (A_\Gamma^+)_c = \bigoplus_{v \in V_u}\N$, and $(A_\Gamma^+)_{ci}^1= \bigoplus_{i \in I_2} A_{\Gamma_i}^+$.
\item Two elements $s,t \in A_\Gamma^+$ are core related if and only if $s_i = t_i$ for all $i \in I_2$.
\item $A_\Gamma^+$ is core factorable, $(A_\Gamma^+)_{ci} \subset A_\Gamma^+$ is $\cap$-closed, and $A_\Gamma^+$ has finite propagation.
\item The action $\alpha\colon (A_\Gamma^+)_c \curvearrowright A_\Gamma^+/_\sim$ is faithful if and only if $\Gamma$ has no universal vertex.
\item The action $\alpha\colon (A_\Gamma^+)_c \curvearrowright A_\Gamma^+/_\sim$ is almost free if and only if $V_u \in \{\emptyset,V\}$, i.e.~the core of $A_\Gamma^+$ is trivial or $A_\Gamma^+$ is the free abelian monoid in $V$.
\end{enumerate}
\end{corollary}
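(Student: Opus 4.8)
The plan is to specialise Theorem~\ref{thm:reg prop for graph products} to the uniform choice $S_v=\N$ and to read off each item. Two preliminary facts organise the computation. First, the vertex monoid satisfies $\N^*=\{1\}$ and is left reversible (any two principal right ideals meet), so that $\N_c=\N$, the quotient $\N/_\sim$ is a single point, $\N_{ci}=\emptyset$, and hence $\N_{ci}'=\N_{ci}^1=\{1\}$; in particular $\N$ is core factorable and $\N_{ci}\subset\N$ is vacuously $\cap$-closed. Second, a vertex is universal exactly when it forms a singleton coconnected component, so $V_u$ indexes the singleton components while $I_2$ indexes those with $\lvert V_i\rvert\geq 2$; for the latter Theorem~\ref{thm:reg properties for coconnected graph products}(i) gives $S_{\Gamma_i}^*=\{1\}$, since every $S_v^*=\N^*=\{1\}$.

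With these in hand, (i)--(iv) follow by reading off the corresponding items of Theorem~\ref{thm:reg prop for graph products}. In (i) all summands $S_v^*$ and $S_{\Gamma_i}^*$ collapse to $\{1\}$, as do the $(S_v)_{ci}'$, leaving $(A_\Gamma^+)^*=\{1\}$, $(A_\Gamma^+)_c=\bigoplus_{v\in V_u}\N$, and $(A_\Gamma^+)_{ci}'=\bigoplus_{i\in I_2}A_{\Gamma_i}^+$; as the units are trivial, this last monoid equals $(A_\Gamma^+)_{ci}^1$. Part (ii) is Theorem~\ref{thm:reg prop for graph products}(iv): the condition $s_v\sim_v t_v$ holds automatically because $\N/_\sim$ is a point, while $s_i\in t_iS_{\Gamma_i}^*=\{t_i\}$ forces $s_i=t_i$, so $s\sim t$ if and only if $s_i=t_i$ for all $i\in I_2$. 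For (iii), Theorem~\ref{thm:reg prop for graph products}(v),(vi) reduce core factorability and $\cap$-closedness to the same properties of $\N$ at the universal vertices, and Theorem~\ref{thm:reg prop for graph products}(ix) yields finite propagation from that of $\N$ together with the finiteness of $S_w^*=\{1\}$; all required facts about $\N$ were noted above. Finally, in Theorem~\ref{thm:reg prop for graph products}(vii) the demand that each $i\in I_2$ carry a non-group vertex is automatic, since $S_w=\N$ is never a group, while $\alpha_v\colon(S_v)_c\curvearrowright S_v/_\sim$ is never faithful, as the nontrivial monoid $(S_v)_c=\N$ acts on the single point $S_v/_\sim$; thus the first requirement holds only for $V_u=\emptyset$, giving (iv).

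For (v) I would bypass the three clauses of Theorem~\ref{thm:reg prop for graph products}(viii)---which sit awkwardly with $\N$, as $\N$ has a nontrivial core but a trivial quotient---and argue straight from (i) and (ii). By (i) the acting monoid is $(A_\Gamma^+)_c=\bigoplus_{v\in V_u}\N$, and by (ii) the quotient $A_\Gamma^+/_\sim$ is canonically $\bigoplus_{i\in I_2}A_{\Gamma_i}^+$ under the projection that forgets the $V_u$-coordinates. A core element alters only those forgotten coordinates, so $\alpha$ is the trivial action, and a trivial action is almost free exactly when the acting monoid is trivial or the underlying set is finite. The former means $V_u=\emptyset$; the latter means $I_2=\emptyset$, each $A_{\Gamma_i}^+$ with $\lvert V_i\rvert\geq 2$ being infinite, which says that every vertex is universal, i.e.~$V_u=V$. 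Hence $\alpha$ is almost free if and only if $V_u\in\{\emptyset,V\}$, the two cases being a trivial core and the free abelian monoid $\bigoplus_{v\in V}\N$. The one genuinely delicate point is (v): one must recognise the induced action as trivial and treat the case $V_u=V$ (so $\Gamma$ is complete) separately, where the core is large yet acts on a single point.
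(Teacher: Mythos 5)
Your proposal is correct, and for items (i)--(iv) it is exactly the specialisation $S_v=\N$ of Theorem~\ref{thm:reg prop for graph products} that the paper intends (the corollary is stated there without a written proof, as an immediate application of that theorem). The preliminary facts you isolate -- $\N^*=\{1\}$, left reversibility giving $\N_c=\N$ and $\N/_\sim$ a point, $\N_{ci}=\emptyset$ hence $\N_{ci}'=\{1\}$, and the identification of $V_u$ with the singleton coconnected components so that $S_{\Gamma_i}^*=\{1\}$ for all $i\in I_2$ -- are precisely what makes each item of the theorem collapse to the stated form, including the observation in (iv) that $\alpha_v\colon\N\curvearrowright\{\mathrm{pt}\}$ is never faithful.

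Where you genuinely diverge is (v), and the divergence is well judged. A literal application of Theorem~\ref{thm:reg prop for graph products}(viii) runs into trouble exactly when $V_u=V$ with $\lvert V\rvert\geq 2$: clause (a) fails since $(S_v)_c=\N\neq\{1\}$ for $v\in V_u$; clause (b) fails since its uniqueness requirement cannot hold when $\lvert V_u\rvert\geq 2$ and every universal vertex has core $\N$; and clause (c) fails since $S_{\Gamma_i}^*=\{1\}$ for all $i\in I_2$. Read literally, the theorem would thus give ``almost free iff $V_u=\emptyset$, or $V_u=V$ with $\lvert V\rvert=1$'', which is strictly smaller than the corollary's $V_u\in\{\emptyset,V\}$. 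Your direct argument -- the core acts trivially on $A_\Gamma^+/_\sim\,\cong\,\bigoplus_{i\in I_2}A_{\Gamma_i}^+$, and a trivial action is almost free iff the acting monoid is trivial ($V_u=\emptyset$) or the underlying set is finite ($I_2=\emptyset$, i.e.\ $V_u=V$ and the quotient is a single point) -- recovers the corollary's statement, and it matches what Proposition~\ref{prop:reg properties for direct sums}(iv)(a) yields when all direct summands are left reversible. So your route not only proves (v) but also implicitly flags that clause (b) of Theorem~\ref{thm:reg prop for graph products}(viii) ought to be phrased in terms of left reversibility (as in Proposition~\ref{prop:reg properties for direct sums}(iv)) rather than nontriviality of the core; bypassing it was the right call. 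One small blemish elsewhere: in (iii) you appeal to finite propagation of $\N$ as having been ``noted above'', but you never actually noted it. Since the paper leaves this notion undefined (it is imported from \cite{ABLS}), your claim is at the same level of detail as the paper's own assertions in Theorem~\ref{thm:reg prop for graph products}(ix) and the surrounding remark, but you should still record explicitly why the left reversible monoid $\N$, with trivial unit group, has finite propagation.
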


\section{The absence of property (AR)}\label{sec:absence}
In this section, we will prove that for many graph products of right LCM semigroups $S_\Gamma$, the only accurate foundation sets are given by elements of $S_\Gamma^*$. In particular, we obtain the an abundance of right LCM semigroups that lack property (AR). Again, the starting point is a basic observation for direct sums of right LCM semigroups, which allows us to boil the analysis down to the coconnected case:

\begin{proposition}\label{prop:AR for direct sums}
Let $(S_i)_{i \in I}$ be a family of right LCM semigroups. If $\bigoplus_{i \in I} S_i$ has property (AR), then $S_i$ has property (AR) for all $i \in I$.
\end{proposition}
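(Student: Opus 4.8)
The plan is to fix an index $i_0\in I$, split off one summand by writing $S=\bigoplus_{i\in I}S_i = S_{i_0}\oplus T$ with $T:=\bigoplus_{i\neq i_0}S_i$ (which is again a right LCM semigroup), and deduce property (AR) for $S_{i_0}$ from that of $S$. I first record the description of orthogonality in a direct sum: for $s=\oplus_i s_i$ and $t=\oplus_i t_i$ one has $s\not\perp t$ if and only if $s_i\not\perp t_i$ for every $i\in I$, because $sS\cap tS\neq\emptyset$ exactly when $s_iS_i\cap t_iS_i\neq\emptyset$ for all $i$. In particular, writing elements of $S$ as pairs $(x,y)\in S_{i_0}\times T$, we get $(x,y)\perp(x',y')$ if and only if $x\perp x'$ or $y\perp y'$.

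Given a foundation set $F$ for $S_{i_0}$, the set $\widetilde F:=\{(f,1):f\in F\}$ is a foundation set for $S$: for $(x,y)\in S$ choose $f\in F$ with $f\not\perp x$, and note $1\not\perp y$ automatically. Applying property (AR) of $S$ yields an accurate refinement $G=\{(x_k,y_k):k\in K\}\subseteq \widetilde F\,S$ of $\widetilde F$; in particular each $x_k$ lies in $fS_{i_0}$ for some $f\in F$, so $\{x_k:k\in K\}\subseteq FS_{i_0}$. Two facts are now available: \emph{(foundation)} for every $x\in S_{i_0}$ and every $b\in T$ there is $k$ with $x_k\not\perp x$ and $y_k\not\perp b$ (apply the foundation property of $G$ to $(x,b)$); and \emph{(accuracy)} for $k\neq k'$ one has $x_k\perp x_{k'}$ or $y_k\perp y_{k'}$.

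The main obstacle is that the accuracy of $G$ may be witnessed entirely in the $T$-coordinate, so that the projection $\{x_k:k\in K\}$, although a foundation set refining $F$, need not be accurate. To circumvent this I localise in $T$ by a single well-chosen element. Since $K$ is finite and nonempty and every singleton is admissible, I may choose a subset $C\subseteq K$ that is maximal subject to $\bigcap_{k\in C}y_kT\neq\emptyset$, and fix $b^*\in\bigcap_{k\in C}y_kT$. Then $b^*T\subseteq y_kT$ for $k\in C$, which forces $K_{b^*}:=\{k:y_k\not\perp b^*\}=C$: the inclusion $C\subseteq K_{b^*}$ is clear since $b^*\in y_kT$, while if $y_k\not\perp b^*$ then any $z\in y_kT\cap b^*T$ lies in $\bigcap_{j\in C\cup\{k\}}y_jT$, so $C\cup\{k\}$ is again admissible and maximality gives $k\in C$. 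Moreover $C$ is pairwise non-orthogonal in $T$, since $y_kT\cap y_{k'}T\supseteq\bigcap_{j\in C}y_jT\neq\emptyset$ for $k,k'\in C$.

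With this choice the set $\{x_k:k\in C\}$ does the job. It is a foundation set for $S_{i_0}$ by the foundation fact applied with $b=b^*$, because the witnessing index necessarily lies in $K_{b^*}=C$; it is contained in $FS_{i_0}$, hence a refinement of $F$; and it is accurate, because for distinct $k,k'\in C$ we have $y_k\not\perp y_{k'}$, so accuracy of $G$ leaves no option but $x_k\perp x_{k'}$ (which in passing shows that the $x_k$, $k\in C$, are pairwise distinct). Carrying this out for each $i_0\in I$ proves the proposition.
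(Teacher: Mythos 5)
Your proof is correct, and its overall skeleton is the same as the paper's: regard a foundation set $F$ for $S_{i_0}$ as the foundation set $\widetilde{F}=\{(f,1):f\in F\}$ for $S$, apply property (AR) of $S$ to obtain an accurate refinement $G$, and then project a suitably chosen subfamily of $G$ back into $S_{i_0}$. The essential difference is \emph{how} that subfamily is chosen, and there your argument does genuinely more work than the paper's. The paper fixes a single $s\in F_a$ and keeps those $f\in F_a$ whose complementary components satisfy $\hat{f}_i\not\perp\hat{s}_i$; but non-orthogonality to a common element is not transitive, so two kept elements can still have orthogonal complementary parts, in which case accuracy of $F_a$ gives no control over their $S_i$-parts. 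Concretely, take $S_{i_0}$ the free monoid on $\{a,b\}$, $T$ the free monoid on $\{c,d\}$, $F=\{a,b\}$, and $F_a=\{(a,c),(ab,d),(aa,d),(b,1)\}$, which is an accurate refinement of $\widetilde{F}$ in $S=S_{i_0}\oplus T$; the choice $s=(b,1)$ yields the projected set $\{a,ab,aa,b\}$, which is a refinement of $F$ and a foundation set but is \emph{not} accurate, since $a\not\perp ab$. Your replacement for this step --- choosing $C\subseteq K$ maximal with $\bigcap_{k\in C}y_kT\neq\emptyset$ and localising at a common right multiple $b^*$, so that $\{k:y_k\not\perp b^*\}=C$ and the $y_k$, $k\in C$, are pairwise non-orthogonal --- is exactly what forces the accuracy of $G$ to be witnessed in the $S_{i_0}$-coordinate, and your verifications of the foundation property (via the element $(x,b^*)$), of the refinement property, and of accuracy are all sound. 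So your proof not only establishes the proposition but repairs a genuine gap in the paper's one-line justification: the paper's selection device is too weak as stated (it works only for favourable choices of $s$, and no argument is given that such a choice exists), whereas your maximal-family localisation works unconditionally.
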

\begin{proof}
Fix $i \in I$ and let $S:=\bigoplus_{i \in I} S_i$. Every foundation set $F$ for $S_i$ is a foundation set for $S$. Suppose that $F$ has an accurate refinement $F_a$ in $S$. For $s \in S_\Gamma$, we let $s=s_i + \hat{s}_i$ with $s_i \in S_i$ and $\hat{s}_i \in \bigoplus_{j \in I\setminus\{i\}} S_j$. If $s \in F_a$, then $\{ f_i \in S_i \mid f \in F_a: \hat{f}_i \not\perp \hat{s}_i\}$ is an accurate refinement for $F$ inside $S_i$.
\end{proof}

\begin{corollary}\label{cor:AR reduction for graph products}
If a graph product $S_\Gamma$ has property (AR), then $S_{\Gamma_i}$ has property (AR) for each coconnected component $\Gamma_i$ of $\Gamma$.
\end{corollary}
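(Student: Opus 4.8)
The plan is to reduce the statement directly to Proposition~\ref{prop:AR for direct sums} by exploiting the direct sum decomposition of $S_\Gamma$ over its coconnected components. Recall from Subsection~\ref{subsec:graph products} that every graph $\Gamma$ admits a unique decomposition into coconnected components $(\Gamma_i)_{i \in I}$, and that the corresponding graph product satisfies $S_\Gamma = \bigoplus_{i \in I} S_{\Gamma_i}$. Each factor $S_{\Gamma_i}$ is itself a graph product of right LCM semigroups, hence again right LCM by \cite{FK}*{Theorem~2.6}, so $(S_{\Gamma_i})_{i \in I}$ is a family of right LCM semigroups of exactly the type to which Proposition~\ref{prop:AR for direct sums} applies.

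With this identification in place, the argument is a one-line consequence. First I would fix a coconnected component $\Gamma_i$. Since $S_\Gamma$ is assumed to have property (AR), and since $S_\Gamma = \bigoplus_{i \in I} S_{\Gamma_i}$ is a direct sum of right LCM semigroups, Proposition~\ref{prop:AR for direct sums} yields immediately that $S_{\Gamma_i}$ has property (AR). As the component $\Gamma_i$ was arbitrary, the conclusion holds for every coconnected component of $\Gamma$, which is precisely the assertion.

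The only point deserving attention — and the nearest thing to an obstacle — is to be sure that $S_\Gamma = \bigoplus_{i \in I} S_{\Gamma_i}$ is genuinely an (external) direct sum of right LCM semigroups in the sense used by Proposition~\ref{prop:AR for direct sums}, rather than merely some coarser quotient identification. This is exactly what is recorded in Subsection~\ref{subsec:graph products}: the edge set of $\Gamma$ is recovered from its components via $E = \{(v,w) \mid (v,w) \in E_i \text{ or } w \notin V_i \ni v \text{ for some } i\}$, so that vertices lying in distinct components are always adjacent. Consequently generators coming from different $S_{\Gamma_i}$ commute, and the defining congruence of the graph product collapses precisely to the direct sum relation. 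Once this structural fact is invoked, no further computation is required and the corollary follows verbatim from the proposition.
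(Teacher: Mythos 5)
Your proof is correct and is precisely the argument the paper intends: the corollary is stated without proof because it follows immediately from Proposition~\ref{prop:AR for direct sums} together with the decomposition $S_\Gamma = \bigoplus_{i \in I} S_{\Gamma_i}$ recorded in Subsection~\ref{subsec:graph products}. Your extra care in checking that this decomposition is a genuine direct sum (and that each $S_{\Gamma_i}$ is right LCM) is exactly the right point to verify, and nothing further is needed.
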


\begin{thm}\label{thm:lack of acc FS}
Let $\Gamma = (V,E)$ be a coconnected graph with at least two vertices and suppose $(S_v)_{v \in V}$ is a family of nontrivial right LCM semigroups. 
\begin{enumerate}[(i)]
\item If $\Gamma$ is infinite, then every foundation set for $S_\Gamma$ contains an invertible element. In particular, $S_\Gamma$ has property (AR) and $C^*(S_\Gamma) = \CQ(S_\Gamma)$.
\item If $\Gamma$ is finite and $E \neq \emptyset$, then the accurate foundation sets for $S_\Gamma$ correspond to $S_\Gamma^*$. In particular, $S_\Gamma$ has property (AR) if and only if $S_\Gamma$ does not admit a foundation set without invertible elements.
\end{enumerate}
\end{thm}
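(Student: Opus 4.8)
The plan is to reduce both parts to the single assertion that a foundation set with no invertible element cannot exist (for (i)), respectively that an accurate foundation set must contain an invertible element (for (ii)); the ``In particular'' clauses are then formal. Once every foundation set is known to contain an invertible $x$, the singleton $\{x\}$ is a (trivially accurate) foundation set with $\{x\}\subseteq F\subseteq FS_\Gamma$, hence an accurate refinement of any foundation set $F$ containing it, so property (AR) holds; moreover the additional relations defining $\CQ(S_\Gamma)$ hold automatically in $C^*(S_\Gamma)$, since the projection associated with the invertible generator is already the unit, whence $C^*(S_\Gamma)=\CQ(S_\Gamma)$. For (ii), once an accurate foundation set contains an invertible $x$, accuracy forces every other member $f$ to satisfy $f\perp x$, which is impossible as $xS_\Gamma=S_\Gamma$; so the set equals $\{x\}$, giving the bijection with $S_\Gamma^*$. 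The stated (AR)-criterion then follows: an accurate refinement of a hypothetical invertible-free foundation set $F_0$ would be a singleton $\{x\}$, $x\in S_\Gamma^*$, with $x=f_0t$ for some $f_0\in F_0$; then $f_0$ left-divides an invertible, and left cancellativity forces $f_0$ to be invertible, contradicting the choice of $F_0$.

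For (i), suppose $F$ is a foundation set without invertible elements; I would produce a single element orthogonal to every $f\in F$. Put $C:=\bigcup_{f\in F}\supp f$. Each $f$ is non-invertible, hence $f\neq 1$ and $C\neq\emptyset$; since $\Gamma$ is infinite while $C$ is finite, $C\subsetneq V$. By Lemma~\ref{lem:blocking paths in coconnected graphs} choose a blocking path $w(1),\dots,w(n)$ for $C$, pick $s_k\in S_{w(k)}\setminus\{1\}$, and set $\sigma:=s_1\cdots s_n$, which is reduced by property (a). The crux is to show $\sigma\perp f$ for every $f\in F$. I would argue through the normal form of \cite{FK}*{Theorem~1.1}: write $u\in\mathrm{Init}(g)$ if some reduced expression of $g$ can be shuffled to begin with a $u$-factor. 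If $\sigma$ and $f$ had a common right multiple $g=\sigma p=fq$, then $\sigma\preceq g$ lets me extend $s_1\cdots s_n$ to a reduced expression of $g$; using properties (a), (b) of the blocking path together with $w(1)\notin C$, no $u$-factor with $u\in C$ can be shuffled to the front (its earliest occurrence is blocked by a non-adjacent predecessor, or by condition (b) if $u$ is not a path vertex), so $\mathrm{Init}(g)\cap C=\emptyset$. On the other hand $f\preceq g$ forces $\emptyset\neq\mathrm{Init}(f)\subseteq\mathrm{Init}(g)$, while $\mathrm{Init}(f)\subseteq C$; hence $\mathrm{Init}(f)=\emptyset$, i.e.\ $f=1$, a contradiction. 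Thus $\sigma\perp f$ for all $f$, contradicting the foundation property, and (i) follows.

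For (ii) the very same construction disposes of any foundation set whose support is a \emph{proper} subset of $V$, so the only surviving possibility is an accurate foundation set $F$ (without invertibles) with $\supp F=V$; this is where finiteness of $\Gamma$ and the hypothesis $E\neq\emptyset$ are used. Fix an edge $(a,b)$, and use that a coconnected graph on at least two vertices has no universal vertex to pick non-neighbours $d$ of $a$ and $e$ of $b$. Taking non-invertible $s_a\in S_a$, $s_b\in S_b$ and capping elements $r_d\in S_d\setminus\{1\}$, $r_e\in S_e\setminus\{1\}$, I would study the families $\{s_a^{\,m}r_d\}_{m\geq1}$ and $\{s_b^{\,m}r_e\}_{m\geq1}$; non-adjacency of the caps makes the members of each family pairwise orthogonal. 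Covering these two infinite families by the finite accurate set $F$ should force, via a capping argument, a member $f_1\in F$ supported only at $a$ and a member $f_2\in F$ supported only at $b$. But $(a,b)\in E$ means $f_1,f_2$ lie in the commuting block $S_aS_b$, so $f_1f_2$ is a common multiple and $f_1\not\perp f_2$; since $f_1\neq f_2$ (they are supported at distinct single vertices), this contradicts accuracy, completing (ii).

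I expect the main obstacle to be exactly this full-support/edge case of (ii): unlike (i) it cannot be resolved by a blocking path, since $C=V$ leaves no vertex to ``escape'' to, and the commutative edge-block $S_aS_b$ alone (e.g.\ $\N^2$) does admit invertible-free accurate foundation sets, so the argument must genuinely exploit the interaction of that block with the non-adjacent vertices supplied by coconnectedness. Making the capping step uniform requires controlling the internal orthogonality of $S_aS_b$ and pinning the coverers $f_1,f_2$ into $S_a$ and $S_b$ for arbitrary right LCM vertex semigroups; in particular one must separate the cases according to whether the incident vertex semigroups are groups, where the non-invertibility needed to build the families has to be located elsewhere and the $S^*$-action of Theorem~\ref{thm:reg properties for coconnected graph products} must be combined with accuracy.
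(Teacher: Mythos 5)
Your reductions of the two ``In particular'' clauses are correct, and the skeleton of your part (i) is the same as the paper's (blocking path, then an element orthogonal to all of $F$). However, there is a recurring genuine gap: throughout, you exclude only the identity, never the invertibles. In (i) you take $s_k \in S_{w(k)}\setminus\{1\}$; if the vertex semigroups along your blocking path have nontrivial units (for instance, are groups), then $\sigma = s_1\cdots s_n$ may be invertible, in which case $\sigma S_\Gamma = S_\Gamma$ meets every principal right ideal and the claimed orthogonality $\sigma \perp f$ is simply false. Even if only the last part $s_n$ is invertible, your shuffle argument collapses: in a common multiple $g=\sigma p$, a factor of $p$ at $w(n)$ can cancel $s_n$, then a factor at $w(n-1)$ can cancel $s_{n-1}$, and so on, unravelling the path from the end, after which $C$-parts can be shuffled to the front. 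This is exactly why the paper first reduces to the case that some $S_w^{\phantom{*}} \neq S_w^*$, uses the ``ending in any prescribed vertex'' clause of Lemma~\ref{lem:blocking paths in coconnected graphs} to make the blocking path end at that $w$, and chooses $s_n \in S_w^{\phantom{*}}\setminus S_w^*$ (compare the hypothesis of Remark~\ref{rem:blocking paths}); your proof never invokes that clause, and it is needed. The same confusion of ``non-identity'' with ``non-invertible'' also undermines (ii): if the cap $r_d$ is invertible then $s_a^m r_d S_\Gamma = s_a^m S_\Gamma$, so your family $\{s_a^m r_d\}_{m\geq 1}$ is not pairwise orthogonal, and if $S_a$ is a group there is no non-invertible $s_a$ at all --- a case you acknowledge but do not resolve.

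The deeper gap is the full-support case of (ii), which you yourself flag as the main obstacle: your capping step is not merely unproved, its conclusion is false. Non-orthogonality to all of $\{s_a^m r_d\}_{m\geq 1}$ does \emph{not} force an element of $F$ to be supported only at $a$. If $z$ is a vertex adjacent to both $a$ and $d$ --- perfectly possible in a coconnected graph, e.g.\ the path on four vertices with $(a,b)$ the middle edge and $d$ the end adjacent to $b$ --- then $f = s_a c$ with $c \in S_z\setminus\{1\}$ satisfies $s_a^m r_d c = s_a^m c\, r_d = f\cdot s_a^{m-1} r_d$, so $f \not\perp s_a^m r_d$ for every $m$, yet $f$ is supported at $\{a,z\}$. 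Hence pigeonholing over your families cannot pin coverers into single vertex semigroups, and the intended contradiction with accuracy along the edge $(a,b)$ never materialises. The paper's proof of (ii) runs along a genuinely different line: take $f \in F$ of maximal length $L$, write $f = st_v$, and use a blocking path for $\{u\}$ together with its neighbours (plus accuracy and the fact that a left cancellative non-group is infinite, to vary $r \in S_u$) to show that $v$ must be an isolated vertex; then a further case analysis on the subset $F' \subseteq F$ of elements avoiding isolated vertices produces elements of the form $strtr$ orthogonal to all of $F$. Nothing in your sketch substitutes for this analysis, so part (ii) remains unproven.
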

\begin{proof}
Both (i) and (ii) hold for trivial reasons if $S_\Gamma$ is a group, so we can assume that there exists $w \in V$ with $S_w^{\phantom{*}} \neq S_w^*$. Let $F \subset S_\Gamma$ be a finite subset without invertible elements. For every $f \in F$, we choose a reduced expression $f=f_{v(1)}\cdots f_{v(m_f)}$ with $m_f \in \N^\times$ and $f_{v(k)} \in S_{v(k)}$. 

Suppose first that $\Gamma$ is infinite. As $f \in S_\Gamma^{\phantom{*}}\setminus S_\Gamma^*$, there is a least $1 \leq k_f \leq m_f$ such that $f_{v(k_f)} \notin S_{v(k_f)}^*$. Then $C := \{ v \in V \mid f_{v(k)} \in S_v \text{ for some } f \in F, 1 \leq k \leq k_f\}$. Then $C$ is a finite set of vertices so that Lemma~\ref{lem:blocking paths in coconnected graphs} grants us a blocking path $w(1),\ldots,w(n)$ for $C$ ending in $w$. If we choose any $s_k \in S_{w(k)}^{\phantom{*}}\setminus \{1\}$ for $1 \leq k < n$ and $s_n \in S_w^{\phantom{*}}\setminus S_w^*$, then $s_1\cdots s_n \perp f$ for all $f \in F$ as $s_1\cdots s_n \perp f_{v(1)}\cdots f_{v(k_f)}$ by construction, see Remark~\ref{rem:blocking paths}. Therefore $F$ is not a foundation set. We conclude that every foundation set for $S_\Gamma$ contains an invertible element $x$, which clearly gives an accurate refinement $\{x\}$. So $S_\Gamma$ has property (AR), but if the only accurate foundation sets come from invertible elements, then the boundary relation $\sum_{f \in F} e_{fS_\Gamma} = 1$ becomes trivial so that $C^*(S) = \CQ(S)$.

Now let $\Gamma$ be finite, $E \neq \emptyset$, and assume $F$ to be accurate as well. We need to show that $F$ is not a foundation set. Without loss of generality, we can require that $f_{v(m_f)}$ is not invertible for all $f \in F$ because invertible ends do not play a role when it comes to intersections of right ideals. Since $F$ does not contain any invertibles, we have $\ell(f) \geq 1$ for all $f \in F$. Let $L:= \max_{f \in F} \ell(F)$, and choose $f \in F$ with $\ell(f)=L$. Then we have $f=st_v$ for some $v \in V, t_v \in S_v\setminus\{1\}$, and $s \in S_\Gamma$ with $\ell(s) =L-1$. We will first show that $v$ is isolated, and then use this together with $E\neq \emptyset$ to conclude that $F$ cannot be a foundation set. 

If $(v,u) \in E$ for some $u \in V$, we employ Lemma~\ref{lem:blocking paths in coconnected graphs} to obtain a blocking path $w(1),\ldots,w(n)$ for $C:= \{u\}\cup N_u$, and set $w(0):=u$. Next, choose $b_k \in S_{w(k)}^{\phantom{*}}\setminus S_{w(k)}^*$ for each $1\leq k \leq n$, and let $r \in S_u\setminus\{1\}$. It then follows that $srb \perp f$ for $b:=b_1\cdots b_n$. Moreover, we have $\ell(srb) \geq m+1$. This could be assumed by extending the path $w(0),\ldots,w(n)$ in $\Gamma^{\text{opp}}$, but actually holds true in any case. It then follows that whenever $f' \in F$ satisfies $f' \not\perp srb$, we have $srb \in f'S_\Gamma$. If $sr \in f'\Gamma$, then $f' \not\perp f \neq f'$ so that $F$ would not be accurate. The blocking path then forces $f' = srb_1\cdots b_k$ for some $1 \leq k \leq n$. However, we then get $f'\perp sr'b$ for every $r' \in S_u\setminus\{r\}$. Since $S_u$ is a left cancellative semigroup that is not a group, it is infinite. Thus there is $r \in S_u\setminus\{1\}$ such that $srb \perp f'$ for all $f' \in F$. 

We deduce from this that $F$ cannot be a foundation set if there exists $f \in F$ with $\ell(f)=L$ that does not end in a part from an isolated vertex. In particular, if $\Gamma$ does not have any isolated vertices, no accurate finite subset $F$ without invertible elements is a foundation set. Now suppose $\Gamma$ has an isolated vertex $\tilde{v}$, and let 
\[F' := \{f \in F \mid f_{v(k)} \in S_v \text{ for some } k \Rightarrow v \text{ is not isolated.}\},\] 
that is, the subset of $F$ consisting of those elements whose reduced expressions do not contain any part coming from an isolated vertex. As $E \neq \emptyset$ and the vertex semigroups are all nontrivial, the finite accurate set $F'$ is also non-empty. 

Suppose first that there is $\tilde{f} \in F'$ with $\tilde{f} \in S_v^{\phantom{*}}\setminus S_v^*$ for some $v\in V$. Since $F'$ is accurate and $(v,u) \in E$ for some $u \in V$, we have $s \notin f'S_\Gamma$ for all $s \in S_u$ and $f' \in F'$. Thus we get $str \perp f'$ for all $f' \in F'$ whenever $s \in S_u, t \in S_{\tilde{v}}$, and $r \in S_w^{\phantom{*}}\setminus S_w^*$, compare Remark~\ref{rem:blocking paths}. For $f \in F\setminus F'$, we have $strtr \perp f$ unless $f \in strtS_\Gamma$ because $\tilde{v}$ is isolated and $r$ is not invertible. Since $F$ is finite while $S_w^{\phantom{*}}\setminus S_w^*$ is infinite, we conclude that there are $s \in S_u, t \in S_{\tilde{v}}$, and $r \in S_w^{\phantom{*}}\setminus S_w^*$ such that $strtr \perp f$ for all $f \in F$. So $F$ is not a foundation set. 

On the other hand, if we have $\ell(f) \geq 2$ for every $f \in F'$, we pick a vertex $v$ that emits an edge. Then $s \notin fS_\Gamma$ for all $s \in S_v,f \in F'$, and thus $str \perp f$ for all $f\in F'$ whenever $s \in S_v, t \in S_{\tilde{v}}$, and $r \in S_w^{\phantom{*}}\setminus S_w^*$. As in the previous case, there are $s,t,r$ such that $strtr \perp f$ for all $f \in F$, and thus $F$ is not a foundation set.

Finally, if $F$ is a foundation set for $S_\Gamma$ with $F \cap S_\Gamma^* = \emptyset$, then every refinement $F'$ of $F$ satisfies $F' \cap S_\Gamma^* = \emptyset$ as well, and thus can never be accurate. On the other hand, every foundation set $F$ with $x \in F \cap S_\Gamma^*$ has an accurate refinement $\{x\}$.
\end{proof}

We point out that the assumptions in Theorem~\ref{thm:lack of acc FS} are modest means to avoid the somewhat pathological cases: $S_\Gamma=S_v$, the free product $S_\Gamma = *_{v \in V} S_v$, and the graph product of groups.

\begin{remark}\label{rem:FS in inf coconnected graphs}
By Theorem~\ref{thm:lack of acc FS}~(i), foundation sets of $S_\Gamma$ are governed by parts from the finite coconnected components in the following sense: Let $F$ be a foundation set for $S_\Gamma$ such that no propert subset of $F$ is a foundation set. If $s=s_{v(1)}\cdots s_{v(n)} \in F$ with $s_{v(k)} \in S_{v(k)}$, then $s_{v(k)} \notin S_{v(k)}^*$ implies that $v(k) \in V_i$ for some finite coconnected component $\Gamma_i=(V_i,E_i)$ of $\Gamma$.
\end{remark}

\begin{corollary}\label{cor:AR for graph products}
Let $\Gamma$ be a graph and $(S_v)_{v \in V}$ a family of nontrivial right LCM semigroups. If there is $i \in I_2$ for which $\Gamma_i=(V_i,E_i)$ is finite with $E_i \neq \emptyset$, $S_v$ is not a group for some $v \in V_i$, and there exists a foundation set $F$ for $S_{\Gamma_i}$ without invertible elements, then $S_\Gamma$ does not have property (AR).
\end{corollary}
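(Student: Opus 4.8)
The plan is to argue by contraposition, reducing the question about the full graph product $S_\Gamma$ to the single finite coconnected component $\Gamma_i$ supplied by the hypothesis, and then invoking the dichotomy of Theorem~\ref{thm:lack of acc FS}~(ii). So I would begin by assuming that $S_\Gamma$ \emph{does} have property (AR) and aim to produce a contradiction.

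The first step is to pass from $S_\Gamma$ to $S_{\Gamma_i}$. Since property (AR) of $S_\Gamma$ is assumed, Corollary~\ref{cor:AR reduction for graph products} immediately gives that $S_{\Gamma_i}$ has property (AR) as well. The second step is to check that $\Gamma_i$ satisfies all the standing hypotheses of Theorem~\ref{thm:lack of acc FS}: being a coconnected \emph{component}, the graph $\Gamma_i$ is coconnected; it has $\lvert V_i\rvert \geq 2$ because $i \in I_2$; it is finite with $E_i \neq \emptyset$ by hypothesis; and the vertex semigroups $(S_v)_{v \in V_i}$ are nontrivial right LCM semigroups. Hence part~(ii) of that theorem applies to $S_{\Gamma_i}$ and tells us that $S_{\Gamma_i}$ has property (AR) if and only if it admits no foundation set without invertible elements.

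The contradiction is then immediate: the hypothesis furnishes a foundation set $F$ for $S_{\Gamma_i}$ consisting of non-invertible elements, which---by the equivalence just quoted---forces $S_{\Gamma_i}$ to lack property (AR), contradicting the first step. Therefore $S_\Gamma$ cannot have property (AR). There is no genuine analytic obstacle here; the entire difficulty was already absorbed into Theorem~\ref{thm:lack of acc FS}~(ii), and the only thing that requires care is the verification of its hypotheses for $\Gamma_i$. I would also note in passing that the clause \emph{$S_v$ is not a group for some $v \in V_i$} is precisely what rules out the degenerate situation in which $S_{\Gamma_i}$ is a group (where no foundation set could avoid invertibles in the first place), so that the remaining hypotheses are mutually consistent and Theorem~\ref{thm:lack of acc FS}~(ii) is applied outside its trivial case.
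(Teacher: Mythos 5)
Your proposal is correct and is essentially the paper's own proof: the paper simply states that the claim follows by combining Theorem~\ref{thm:lack of acc FS} with Corollary~\ref{cor:AR reduction for graph products}, which is precisely the contrapositive argument you spell out (pass from $S_\Gamma$ to $S_{\Gamma_i}$ via the reduction corollary, then apply part~(ii) of the theorem to the finite coconnected component). Your added remark that the ``$S_v$ is not a group'' clause only excludes the degenerate case is also accurate, since the existence of a foundation set without invertibles already forces $S_{\Gamma_i}$ not to be a group.
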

\begin{proof}
The claim follows from combining Theorem~\ref{thm:lack of acc FS} with Corollary~\ref{cor:AR reduction for graph products}.
\end{proof}

The previous results apply nicely to right-angled Artin monoids.

\begin{corollary}\label{cor:AR for ra Artin monoids}
For graph $\Gamma$, the following conditions are equivalent: 
\begin{enumerate}[(1)]
\item Every finite coconnected component $\Gamma_i$ of $\Gamma$ is edge-free.
\item Every finitely generated direct summand of $A^+_\Gamma$ is free.
\item The right-angled Artin monoid $A^+_\Gamma$ has property (AR).
\end{enumerate}  
\end{corollary}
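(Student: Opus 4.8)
The plan is to establish the equivalences through the chain $(1)\Leftrightarrow(2)$ on the one hand and $(1)\Leftrightarrow(3)$ on the other, so that property (AR) is pinned down entirely by the combinatorics of the finite coconnected components. Throughout I would exploit that for $A_\Gamma^+$ the vertex monoids are $S_v=\N$, so that $(A_\Gamma^+)^*=\{1\}$ and $A_\Gamma^+=\bigoplus_{i\in I}A_{\Gamma_i}^+$.

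For $(1)\Leftrightarrow(2)$ the point is the identification: a coconnected component $\Gamma_i$ is edge-free if and only if $A_{\Gamma_i}^+$ is a free monoid. If $E_i=\emptyset$ the graph product carries no commutation relations, so $A_{\Gamma_i}^+=\ast_{v\in V_i}\N$ is free on $V_i$; conversely an edge $(v,w)$ forces the distinct generators $\iota_v(1),\iota_w(1)$ to commute, which is impossible in a free monoid, as their images $e_v\neq e_w$ under the abelianisation $A_{\Gamma_i}^+\to\N^{(V_i)}$ would then have to be parallel. Using the augmentation $A_{\Gamma_i}^+\to\N$ one sees each $\iota_v(1)$ is irreducible, so $A_{\Gamma_i}^+$ is finitely generated exactly when $V_i$ is finite; hence the finitely generated direct summands of $\bigoplus_i A_{\Gamma_i}^+$ are precisely the $A_{\Gamma_i}^+$ with $\Gamma_i$ finite, and (1) and (2) make the same assertion about them.

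For $(3)\Rightarrow(1)$ I would argue contrapositively. If some finite coconnected component $\Gamma_i$ has an edge, then $i\in I_2$, $\Gamma_i$ is finite with $E_i\neq\emptyset$, $S_v=\N$ is not a group, and the set of vertex generators $\{\iota_v(1):v\in V_i\}$ is a foundation set for $A_{\Gamma_i}^+$ containing no invertible element, since every nontrivial element is right-divisible by the generator opening its reduced expression while $1$ is orthogonal to nothing. Corollary~\ref{cor:AR for graph products} then yields that $A_\Gamma^+$ fails property (AR).

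The substance is $(1)\Rightarrow(3)$, and this is where I expect the real work. Assuming every finite coconnected component is edge-free, take a foundation set $F$ for $A_\Gamma^+$, shrink to a minimal foundation subset $F_0\subseteq F$, and invoke Remark~\ref{rem:FS in inf coconnected graphs} (with $S_v^*=\{1\}$): every vertex occurring in an element of $F_0$ lies in a finite, hence edge-free, component, so $F_0$ sits inside $T:=\bigoplus_{\Gamma_i\text{ finite}}A_{\Gamma_i}^+$, a direct sum of finite-rank free monoids. As $T$ is a direct summand, orthogonality in $T$ agrees with that in $A_\Gamma^+$, and since elements of $T$ are trivial in the infinite components, an accurate foundation set for $T$ refining $F_0$ is automatically an accurate refinement of $F$; it thus suffices to produce accurate refinements in $T$. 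For this I would pass to the cylinder picture: writing elements of $T$ as tuples of words, two elements are non-orthogonal precisely when in each component one word is a prefix of the other, so foundation sets correspond to cylinder covers of the boundary $\prod_{j}\partial\F_{n_j}$ and accurate foundation sets to cylinder partitions. With $J$ the finitely many components touched by $F_0$ and $L_j:=\max_{f\in F_0}\ell(f_j)$, the finite set $F_a$ of all tuples carrying a length-$L_j$ word in each $j\in J$ and the identity elsewhere is such a partition; one checks directly that it is accurate (distinct words of equal length are incomparable), a foundation set (every tuple is comparable to one of them), and a refinement of $F_0$ (each such tuple extends some $f\in F_0$, by comparing lengths against the cover). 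The main obstacle is exactly this last step — showing that finite-rank free monoids, and their direct sums, enjoy property (AR), a case escaping both parts of Theorem~\ref{thm:lack of acc FS} since part (i) needs an infinite graph and part (ii) needs $E\neq\emptyset$ — and the cylinder/Stone-space argument sketched above is the cleanest route I see; alternatively one could cite that free monoids satisfy property (AR) from \cite{BS1} and bootstrap along the components.
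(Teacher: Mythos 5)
Your proposal is correct and follows essentially the same route as the paper: the equivalence $(1)\Leftrightarrow(2)$ via the decomposition $A_\Gamma^+=\bigoplus_{i\in I}A_{\Gamma_i}^+$, the implication $(3)\Rightarrow(1)$ by feeding the vertex-generator foundation set $\{a_v \mid v\in V_i\}$ into Corollary~\ref{cor:AR for graph products}, and $(1)\Rightarrow(3)$ by using Remark~\ref{rem:FS in inf coconnected graphs} to confine (minimal) foundation sets to the direct sum over finite coconnected components, which under $(1)$ is a direct sum of finitely generated free monoids. The only difference is that you spell out, via the equal-length-words partition in each touched component, the accurate-refinement construction for such direct sums, a step the paper dismisses with ``clearly admits accurate refinements''; your explicit verification (accuracy from incomparability of distinct equal-length words, refinement from comparing lengths) is a correct filling-in of that detail.
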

\begin{proof}
The equivalence of $(1)$ and $(2)$ is clear from the direct sum description of $A_\Gamma^+$ in Subsection~\ref{subsec:graph products}. From Remark~\ref{rem:FS in inf coconnected graphs} we infer that it suffices to obtain accurate refinements of foundation sets $F$ for $A_\Gamma^+$ with $F \subset \bigoplus_{v \in V_u} S_v \oplus \bigoplus_{i \in I_2: \lvert V_i \rvert < \infty} A_{\Gamma_i}^+$. But if $(2)$ holds, then the latter is just a direct sum of finitely generated free monoids, and clearly admits accurate refinements. So $(2)$ implies $(3)$. Finally, if $(3)$ is valid and $\Gamma_i=(V_i,E_i)$ is a coconnected component of $\Gamma$ with $2 \leq \lvert V_i\rvert <\infty$, then $\{a_v \mid v \in V_i\}$ is a foundation set for $A_{\Gamma_i}^+$ without invertible elements, so Corollary~\ref{cor:AR for graph products} forces $E_i = \emptyset$, that is, $(1)$ holds.
\end{proof}

By Corollary~\ref{cor:AR for ra Artin monoids}, there exist countably many mutually non-isomorphic, finitely generated right LCM semigroups without property (AR). As a final part of this section, we address the existence of a generalised scale for right-angled Artin monoids associated to finite graphs. The existence of a generalised scale turned out to be relevant for a standardised approach to study KMS-states on the semigroup $C^*$-algebra $C^*(A_\Gamma^+)$, see \cite{ABLS}. We first note that free monoids have a generalised scale only if they are finitely generated and nonabelian, in which case it is unique: 

\begin{proposition}\label{prop:unique gen scale on free monoids}
The free monoid $\IF_m^+$ in $2 \leq m < \infty$ generators admits a unique generalised scale $N\colon \IF_m^+ \to \N^\times$ given by $N(w)= m^{\ell(w)}$, where $\ell$ denotes the word length of $w \in \IF_m^+$.
\end{proposition}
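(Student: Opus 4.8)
The plan is to exploit the especially transparent core structure of $\IF_m^+$ to reduce everything to the behaviour of a candidate scale on the generators, then verify existence by direct computation, and finally establish uniqueness from the foundation-set requirement at the smallest value. First I would record that since $m \geq 2$ we have $S^* = \{1\}$, and for any $a \neq 1$ one can choose a generator whose letter differs from the first letter of $a$, making $a$ orthogonal to it; hence $S_c = \{1\}$. Consequently the core relation $\sim$ is just equality, so for every $n$ the quotient $N^{-1}(n)/_\sim$ coincides with $N^{-1}(n)$, and the only minimal complete set of representatives is the whole set $N^{-1}(n)$. I would also recall that any generalised scale $N'$ satisfies $\ker N' = S_c = \{1\}$, so $N'$ attains the value $1$ only at the identity.

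For existence I would check directly that $N(w) = m^{\ell(w)}$ works. It is a nontrivial homomorphism into $\N^\times$ because $\ell$ is additive on $\IF_m^+$, and its image is exactly $\{m^k : k \geq 0\}$. For each level $n = m^k$, the set $N^{-1}(m^k)$ is the set of words of length $k$, of which there are exactly $m^k = n$, matching the cardinality requirement $\lvert N^{-1}(n)/_\sim\rvert = n$. This set is accurate, since for two distinct words of equal length neither is a prefix of the other, whence they are orthogonal; and it is a foundation set, since any $s$ either has a length-$k$ prefix (if $\ell(s) \geq k$) or extends to a word of length $k$ (if $\ell(s) < k$), in both cases sharing a principal right ideal with some word of length $k$.

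For uniqueness, let $N'$ be any generalised scale and put $n_j := N'(a_j)$ for the generators $a_1,\dots,a_m$; since $N'$ is a homomorphism it is determined by these values, and $n_j \geq 2$ by the kernel computation. Set $N_0 := \min_j n_j$. Because every letter contributes a factor $\geq N_0 \geq 2$, any word of length $\geq 2$ has $N'$-value $\geq N_0^2 > N_0$, while $N'(1) = 1 < N_0$; hence $N'^{-1}(N_0)$ consists exactly of the generators $a_j$ with $n_j = N_0$. The foundation-set condition at level $N_0$ is now decisive: if some generator $a_i$ had $n_i > N_0$, then $s = a_i$ would be orthogonal to every generator in $N'^{-1}(N_0)$ (distinct generators are orthogonal), so $N'^{-1}(N_0)$ could not be a foundation set. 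Therefore all $n_j$ equal $N_0$, so $N'^{-1}(N_0)$ is the full set of $m$ generators, and the cardinality condition $\lvert N'^{-1}(N_0)\rvert = N_0$ forces $N_0 = m$. Then $N'(w) = m^{\ell(w)} = N(w)$, which proves uniqueness.

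The homomorphism and accuracy verifications are routine; the genuinely load-bearing step is the uniqueness argument, and within it the observation that the foundation-set property at the minimal value $N_0$ compels all generator weights to coincide. The main obstacle to watch is justifying that nothing other than the minimal-weight generators maps to $N_0$, which is precisely where the bound $n_j \geq 2$ (equivalently $\ker N' = \{1\}$) enters; this is also the place where the hypothesis $m \geq 2$ is essential, consistent with the fact that the structure degenerates otherwise.
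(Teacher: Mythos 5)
Your proof is correct, and it diverges from the paper's in one substantive place: the mechanism forcing all generators to carry the same weight. Both arguments begin the same way --- $S_c = \{1\}$, so $\sim$ is equality and $\ker N' = \{1\}$, and hence for each $n$ in the image the set $N'^{-1}(n)$ itself must be an accurate foundation set of cardinality $n$. The paper then fixes a generator $a_i$, supposes $\tilde N(a_j) \neq \tilde N(a_i)$, and applies the foundation-set property of $\tilde N^{-1}(\tilde N(a_i))$ to the auxiliary element $a_j a_i$: after ruling out the prefixes $1$, $a_j$, $a_j a_i$, the witnessing element $w$ must lie in $a_j a_i \IF_m^+$, so multiplicativity gives $\tilde N(w) \geq \tilde N(a_j)\tilde N(a_i) > \tilde N(a_i) = \tilde N(w)$, a contradiction. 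You instead work at the minimal generator weight $N_0$: multiplicativity together with $\ker N' = \{1\}$ shows that $N'^{-1}(N_0)$ consists of exactly the minimal-weight generators, and a generator of strictly larger weight would be orthogonal to every element of that set, contradicting the foundation-set property with no case analysis at all. Your variant is slightly more economical, since passing to the minimum eliminates both the auxiliary element and the prefix discussion; the paper's variant has the mild advantage of working at the level of an arbitrary generator. You also verify existence in detail (accuracy and the foundation property of the sets of words of fixed length), which the paper asserts in a single sentence; that verification is routine but worth writing once. Both arguments close identically: the preimage of the common generator value is precisely the $m$ generators, so the cardinality requirement $\lvert N'^{-1}(N_0)\rvert = N_0$ forces $N_0 = m$ and hence $N'(w) = m^{\ell(w)}$.
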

\begin{proof}
The map $N$ is a generalised scale. On the other hand, let $\tilde{N}$ be a generalised scale on $\IF_m^+ = \langle a_1,\ldots,a_m\rangle$, and fix $1 \leq i \leq m$. Then $\tilde{N}(a_i)>1$ as $a_i$ is not part of $(\IF_m^+)_c = \{1\}$. By definition of $\tilde{N}$ and since $\sim$ is trivial, the set $\tilde{N}^{-1}(\tilde{N}(a_i))$ is an accurate foundation set for $\IF_m^+$ of cardinality $\tilde{N}(a_i)$ that contains $a_i$. If there was $1\leq j\leq m$, $j \neq i$ such that $\tilde{N}(a_j) \neq \tilde{N}(a_i)$, then the foundation set property would give a $w \in \tilde{N}^{-1}(\tilde{N}(a_i))$ such that $w \in a_ja_i\IF_m^+$. As this forces $\tilde{N}(w) \geq \tilde{N}(a_i)\tilde{N}(a_j) > \tilde{N}(a_i)=\tilde{N}(w)$, we arrive at a contradiction. Thus $\tilde{N}(a_j)=\tilde{N}(a_i)$ for all $j \neq i$. But as $\{a_1,\ldots,a_m\}$ is an accurate foundation set for $\IF_m^+$, we conclude that $\tilde{N}(a_i) =m$ for all $1 \leq i \leq m$.
\end{proof}

We call $m \in \bigoplus_{i \in I} \{k \in \N \mid 2 \leq k<\infty\}$ \emph{rationally independent} if for all distinct $k,k' \in \bigoplus_{i \in I} \N$, the supernatural numbers $\prod_{i \in I} m_i^{k_i}$ and $\prod_{i \in I} m_i^{k'_i}$ are distinct.

\begin{proposition}\label{prop:gen scale on direct sum of free monoids}
Let $M$ be a free abelian monoid, $m \in \bigoplus_{i \in I} \{k \in \N \mid 2 \leq k<\infty\}$ for some nonempty set $I$. Then $S:= M \oplus \bigoplus_{i \in I} \IF_{m_i}^+$ admits a generalised scale $N\colon S \to \N^\times$ if and only if $m$ is rationally independent. In this case, $N$ restricts to the unique generalised scales $N_i$ on $\IF_{m_i}^+$, and is therefore unique.
\end{proposition}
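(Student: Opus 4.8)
The plan is to reduce everything to two structural facts: the core of $S$ is exactly $M$, so every generalised scale is controlled by its restrictions to the free factors, and in $S$ both orthogonality and the core relation are governed componentwise by the prefix order on each $\IF_{m_i}^+$. Concretely, $M$ is free abelian, hence left reversible and equal to its own core, while $(\IF_{m_i}^+)_c=\{1\}$; so Proposition~\ref{prop:reg properties for direct sums} gives $S_c=M$. Since every generalised scale satisfies $\ker N=S_c$ (Subsection~\ref{subsec:right LCM}), such an $N$ is trivial on $M$ and determined by its restrictions $N_i:=N|_{\IF_{m_i}^+}$, with $N(s)=\prod_i N_i(w_i)$ when $s$ has $i$th reduced component $w_i$; moreover each generator $a_{i,j}\notin S_c$ forces $N_i(a_{i,j})\geq 2$. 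Finally $s\sim t$ iff all $\IF$-components coincide, and $s\perp t$ iff some pair $w_i,u_i$ is prefix-incomparable, where one uses that two distinct words of equal length are always prefix-incomparable, while a shorter word is orthogonal to a longer one exactly when it is not a prefix.

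For the \emph{if} direction, assume $m$ rationally independent and set $N_i(w)=m_i^{\ell(w)}$, the unique generalised scale on $\IF_{m_i}^+$ from Proposition~\ref{prop:unique gen scale on free monoids}, so that $N(s)=\prod_i m_i^{\ell(w_i)}$. Rational independence lets me decode each $n\in N(S)$ as a \emph{unique} length vector $(k_i)$ with $n=\prod_i m_i^{k_i}$, whence $N^{-1}(n)/_\sim$ is precisely the set of tuples $(w_i)$ with $\ell(w_i)=k_i$; counting gives $\lvert N^{-1}(n)/_\sim\rvert=\prod_i m_i^{k_i}=n$. A minimal complete set of representatives is accurate because any two distinct tuples differ in some component in two equal-length, hence incomparable, words, and is a foundation set because an arbitrary $(v_i)$ is dominated componentwise by truncating or extending each $v_i$ to length $k_i$. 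Thus $N$ is a generalised scale.

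For the \emph{only if} direction I would peel off the factors in increasing order of value. Let $\mu$ be the smallest generator value; then $N^{-1}(\mu)/_\sim$ consists only of single generators of value $\mu$. Accuracy forces all of them into one component $i_1$ (single generators from different components are never orthogonal), foundation forces \emph{every} generator of $i_1$ to have value $\mu$ (a generator of $i_1$ with a different value could not be dominated), and $\lvert N^{-1}(\mu)/_\sim\rvert=\mu$ then yields $m_{i_1}=\mu$ and $N_{i_1}(w)=m_{i_1}^{\ell(w)}$. Iterating over the strictly increasing minimal values of the not-yet-treated components, the same three observations apply once accuracy is used to discard the two spurious kinds of level-set elements, namely tuples supported on the already-treated components (whose values are products of earlier $m_i$) and genuinely mixed tuples, since both are non-orthogonal to the fresh single generators. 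This pins $N_i(w)=m_i^{\ell(w)}$ for every $i$, identifying each $N_i$ with the unique scale on $\IF_{m_i}^+$ and hence proving uniqueness. With all $N_i$ pinned one has $\lvert N^{-1}(n)/_\sim\rvert=n\cdot\#\{(k_i):\prod_i m_i^{k_i}=n\}$, so the defining equality $\lvert N^{-1}(n)/_\sim\rvert=n$ for every $n\in N(S)$ is equivalent to the count being $1$ throughout, that is, to rational independence.

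The hard part is the peeling step: organising the induction over the minimal fresh values and, at each stage, cleanly separating the fresh single-generator contributions from products of previously treated factors and from mixed terms. A subtle point worth flagging is that the stagewise accuracy conditions are \emph{strictly weaker} than rational independence — for example $\{4,6,9\}$ passes every ``not a product of earlier values'' test yet fails rational independence because $4\cdot 9=6^2$ — so full rational independence cannot be read off from the peeling and must instead be extracted from the global cardinality identity in the final step.
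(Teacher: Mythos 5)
Your proof is correct, and it follows the same two-part skeleton as the paper's: (a) any generalised scale must restrict to the unique scale $N_i(w)=m_i^{\ell(w)}$ on each free factor, and (b) the resulting product homomorphism is a generalised scale precisely when $m$ is rationally independent --- with an essentially identical treatment of the \emph{if} direction and of the final step, where the count $\lvert N^{-1}(n)/_\sim\rvert = n\cdot\#\{(k_i)\colon\prod_i m_i^{k_i}=n\}$ forces unique factorisation. Where you genuinely differ is in how (a) is established. The paper fixes a component $i$ and a generator $a_{i,k}$, takes the accurate foundation set inside $N^{-1}(N(a_{i,k}))$ supplied by the definition of a generalised scale, and projects it to $\IF_{m_i}^+$: the projection is a foundation set for $\IF_{m_i}^+$ whose elements are orthogonal to $a_{i,k}$ and have value at most $N(a_{i,k})$; this first forces all generators of component $i$ to share one value, and once the values agree each projected element must be a single generator (its complementary part lies in $\ker N = M$, so accuracy in $S$ descends to distinctness of the projections), whence the projected set is exactly $\{a_{i,1},\dots,a_{i,m_i}\}$ and the common value is $m_i$. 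This handles every component independently, with no ordering. Your peeling argument instead orders the components by their minimal generator values and exploits cross-component non-orthogonality (elements of distinct direct summands commute) to localise each fresh level set in a single component; it is sound --- also for infinite $I$, since the stage values strictly increase, so each component is reached at a finite stage --- though one justification is slightly off: your ``genuinely mixed tuples'' never lie in the level set at all, since any element of $N^{-1}(\mu_k)$ containing a letter from an untreated component plus at least one further letter has value at least $2\mu_k$; only the tuples supported on already-treated components actually need the accuracy argument. The cost of your route is the induction and its bookkeeping, which the paper's componentwise projection avoids; what it buys is a very transparent picture of why accuracy localises level sets. Your closing caveat (the $\{4,6,9\}$ example) is exactly right and explains why both proofs must end with the same global cardinality identity, which is precisely the paper's argument for the converse half of its step (b).
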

\begin{proof}
As $M=S_c=\ker N$ for any generalised scale $N$ on $S$, see \cite{ABLS}*{Proposition~3.6(i)}, we can focus on $(\IF_{m_i}^+)_{i \in I}$. Recall that $\IF_{m_i}^+$ is the free monoid in $m_i$ generators, which we denote by $a_{i,1},\ldots,a_{i,m_i}$. The strategy is to prove that 
\begin{enumerate}[(a)]
\item any generalised scale $N$ on $S$ restricts to $N_i$ on $\IF_{m_i}^+$, and
\item the homomorphism $N\colon S \to \N^\times$ arising from $(N_i)_{i \in I}$ is a generalised scale if and only if $m$ is rationally independent.
\end{enumerate}
For (a), suppose $S$ admits a generalised scale $N$ and fix $i \in I, 1 \leq k \leq m_i$. Then $N(a_{i,k})>1$ and there are $w_1,\ldots,w_{N(a_{i,k})-1} \in S$ such that $\{a_{i,k},w_1,\ldots,w_{N(a_{i,k})-1}\}$ is an accurate foundation set for $S$ contained in $N^{-1}(N(a_{i,k}))$. Let us decompose $w_\ell$ as $w_\ell = \hat{w}_\ell \oplus \check{w}_\ell \in \IF_{m_i}^+ \oplus \bigl(M_n \oplus \bigoplus_{j \in I\setminus\{i\}} \IF_{m_j}^+\bigr)$. Then $\{a_{i,k},\hat{w}_1,\ldots,\hat{w}_{N(a_{i,k})-1}\}$ is a foundation set for $\IF_{m_i}^+$ with $a_{i,k} \perp \hat{w}_\ell$ and $N(\hat{w}_\ell)\leq N(a_{i,k})$ for all $\ell$. This forces $\{a_{i,k},\hat{w}_1,\ldots,\hat{w}_{N(a_{i,k})-1}\} \supset \{a_{i,1},\ldots,a_{i,m_i}\}$, and thus $N(a_{i,\ell}) \leq N(a_{i,k})$ for all $1 \leq \ell \leq m_i$, just like in the proof of Proposition~\ref{prop:unique gen scale on free monoids}. As $k$ was arbitrary, we deduce $N(a_{i,k})=m_i=N_i(a_{i,k})$ for all $i,k$.

In view of (a), the question behing the main claim becomes: Under which condition is the homomorphism $N\colon S \to \N^\times$ arising from the family of generalised scales $(N_i)_{i \in I}$ itself a generalised scale? If $m$ is rationally independent, then every $k \in N(S)$ has a factorization $k = \prod_{i \in I} m_i^{k_i}$ with uniquely determined $k_i \in \N$. This implies that 
\[\begin{array}{c} N^{-1}(k) = \{ t \oplus \bigoplus\limits_{i \in I} w_i \mid t \in M, w_i \in \IF_{m_i}^+ \text{ with } \ell_i(w_i) = k_i\}. \end{array}\]
Therefore, $\lvert N^{-1}(k)/_\sim \rvert = k$, and any transversal of $N^{-1}(k)/_\sim$ is an accurate foundation set for $S$, that is, $N$ is a generalised scale. On the other hand, if there are $k,k' \in \bigoplus_{i \in I}\N, k \neq k'$ such that $K:= \prod_{i \in I} m_i^{k_i} = \prod_{i \in I} m_i^{k'_i}$, then both $k$ and $k'$ yield a set of $K$ mutually orthogonal elements $s_1,\ldots,s_K \in S$ and $t_1,\ldots,t_K \in S$, respectively, with $N(s_j)=K=N(t_j)$ for all $j$. Since there is $i \in I$ with $k_i\neq k_i'$, the $i$-th components of $s_j$ and $t_{j'}$ have different length for all $j,j'$. Thus $s_j \not\sim t_{j'}$ for all $j,j'$, and we get $\lvert N^{-1}(K)/_\sim \rvert \geq 2K$. Therefore $N$ is not a generalised scale in this case.
\end{proof}

We can now state our conclusions for right-angled Artin monoids.

\begin{corollary}\label{cor:gen scale for ra Artin monoids}
For every graph $\Gamma$, the right-angled Artin monoid $A^+_\Gamma$ admits a generalised scale $N$ if and only if $V_u \neq V$, all coconnected components $\Gamma_i=(V_i,E_i)$ are finite and edge-free, and $\bigoplus_{i \in I_2} \lvert V_i\rvert$ is rationally independent. In this case, $N$ is unique.
\end{corollary}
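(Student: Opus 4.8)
The plan is to recognise the three graph conditions as precisely the hypotheses that present $A_\Gamma^+$ in the form handled by Proposition~\ref{prop:gen scale on direct sum of free monoids}, and then quote that proposition. The preliminary bookkeeping is that the singleton coconnected components of $\Gamma$ are exactly the universal vertices, so that Corollary~\ref{cor:reg prop for ra Artin monoids}(i) gives the decomposition $A_\Gamma^+ = \bigoplus_{v \in V_u}\N \oplus \bigoplus_{i \in I_2} A_{\Gamma_i}^+$ with core $(A_\Gamma^+)_c = \bigoplus_{v\in V_u}\N =: M$, a free abelian monoid.

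For the forward implication I would assume a generalised scale $N$ exists and extract the three conditions in turn. Since every generalised scale satisfies $\ker N = (A_\Gamma^+)_c$, the case $V_u = V$ would give $A_\Gamma^+ = M = (A_\Gamma^+)_c$ and force $N$ to be trivial; hence $V_u \neq V$, and in particular $I_2 \neq \emptyset$. The existence of $N$ also yields property (AR) by \cite{ABLS}*{Proposition~3.6(v)}, so Corollary~\ref{cor:AR for ra Artin monoids} makes every \emph{finite} coconnected component edge-free.

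The step I expect to be the crux is ruling out \emph{infinite} coconnected components, because property (AR) alone permits them: by Theorem~\ref{thm:lack of acc FS}(i) an infinite coconnected component automatically has property (AR). Here I would argue directly with the scale. Suppose $\Gamma_i$ is infinite with $i \in I_2$ and pick $v \in V_i$; then $a_v \notin (A_\Gamma^+)_c$, so $n := N(a_v) > 1$. Choose a minimal complete set of representatives $F$ for $N^{-1}(n)/_\sim$, which by definition of a generalised scale is an accurate foundation set. Being accurate, $F$ has no proper foundation subset (each element of $F$ is orthogonal to all the others, so a strictly smaller subset fails to meet it), whence Remark~\ref{rem:FS in inf coconnected graphs} shows that every $f \in F$ is supported only on finite coconnected components. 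However, $F$ must contain a representative of the class $[a_v]$, and by Corollary~\ref{cor:reg prop for ra Artin monoids}(ii) such a representative agrees with $a_v$ on $\Gamma_i$ and therefore has nontrivial support on the infinite component $\Gamma_i$ — a contradiction. Thus all coconnected components are finite and edge-free, so $A_{\Gamma_i}^+ = \IF_{\lvert V_i\rvert}^+$ for each $i \in I_2$ and $A_\Gamma^+ = M \oplus \bigoplus_{i \in I_2}\IF_{\lvert V_i\rvert}^+$. Proposition~\ref{prop:gen scale on direct sum of free monoids} then forces $m := \bigoplus_{i \in I_2}\lvert V_i\rvert$ to be rationally independent and records uniqueness of $N$.

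For the converse the three conditions give exactly $A_\Gamma^+ = M\oplus\bigoplus_{i\in I_2}\IF_{\lvert V_i\rvert}^+$ with $I_2\neq\emptyset$ (a non-universal vertex lies in a component of size $\geq 2$) and $m$ rationally independent, so Proposition~\ref{prop:gen scale on direct sum of free monoids} hands back a generalised scale, unique by the same result.
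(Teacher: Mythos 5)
Your proposal is correct and follows essentially the same route as the paper's proof: nontriviality of $N$ together with $\ker N = (A_\Gamma^+)_c$ gives $V_u \neq V$, property (AR) via \cite{ABLS}*{Proposition~3.6(v)} and Corollary~\ref{cor:AR for ra Artin monoids} handles finite components, Remark~\ref{rem:FS in inf coconnected graphs} applied to an accurate foundation set arising from the scale rules out infinite components, and Proposition~\ref{prop:gen scale on direct sum of free monoids} settles both the rational-independence condition and the converse. Your treatment of the infinite-component step is in fact slightly more careful than the paper's (you justify that accurate foundation sets satisfy the minimality hypothesis of the remark, and you use Corollary~\ref{cor:reg prop for ra Artin monoids}(ii) to handle an arbitrary transversal rather than assuming $a_v$ itself is a representative), but the underlying argument is the same.
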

\begin{proof}
The condition $V_u \neq V$ is equivalent to saying that $A_\Gamma^+$ is non-abelian, i.e.$I_2 \neq \emptyset$. So if all coconnected components $\Gamma_i=(V_i,E_i)$ are finite and edge-free, then $A_\Gamma^+ \cong \bigoplus_{v \in V_u} \N \oplus \bigoplus_{i \in I_2} \IF_{\lvert V_i\rvert}^+$. Hence, Proposition~\ref{prop:gen scale on direct sum of free monoids} implies that $A_\Gamma^+$ has a (unique) generalised scale $N$ if and only if $\bigoplus_{i \in I_2} \lvert V_i\rvert$ is rationally independent.

Conversely, suppose $A_\Gamma^+$ admits a generalised scale $N$. Since $N$ is a nontrivial homomorphism with $\ker~N = \bigoplus_{v \in V_u} \N$, we need to have $V_u \neq V$ so that the set $I_2$ is non-empty. Moreover, $A_\Gamma^+$ has property (AR) by \cite{ABLS}*{Proposition~3.6}, so Corollary~\ref{cor:AR for ra Artin monoids} implies that all finite coconnected components $\Gamma_i$ of $\Gamma$ are edge-free. If there was an infinite coconnected component $\Gamma_i=(V_i,E_i)$, then $1 <N(a_v) < \infty$ for all $v \in V_i$, and the defining property of a generalised scale would yield an accurate foundation set of the form $\{a_v,f_1,\ldots,f_{N(a_v)-1}\}$ for suitable $f_k \in A_\Gamma^+$. However, this contradicts Remark~\ref{rem:FS in inf coconnected graphs}, and we conclude that $\Gamma_i$ is finite for all $i \in I_2$. But then $A_\Gamma^+$ is covered by Proposition~\ref{prop:gen scale on direct sum of free monoids}, and it follows that $\bigoplus_{i \in I_2} \lvert V_i\rvert$ is rationally independent.
\end{proof}

\begin{corollary}\label{cor:admissible ra Artin monoids}
For every graph $\Gamma$, the right-angled Artin monoid $A^+_\Gamma$ is admissible if and only if it admits a generalised scale.
\end{corollary}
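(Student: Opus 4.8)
The plan is to reduce the statement to a single remaining admissibility condition. The forward implication is immediate: by the definition of admissibility recalled in Subsection~\ref{subsec:right LCM} (following \cite{ABLS}*{Definition~3.1}), an admissible right LCM semigroup admits a generalised scale by fiat. So the whole content lies in the reverse implication.

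For the reverse implication, I would start from the observation that admissibility of $A_\Gamma^+$ requires exactly three things: that $A_\Gamma^+$ be core factorable, that $(A_\Gamma^+)_{ci} \subset A_\Gamma^+$ be $\cap$-closed, and that the image $N(A_\Gamma^+) \subset \N^\times$ of a generalised scale be freely generated by its irreducible elements. The first two conditions hold for \emph{every} right-angled Artin monoid by Corollary~\ref{cor:reg prop for ra Artin monoids}(iii), so they impose no restriction whatsoever. Thus, assuming $A_\Gamma^+$ admits a generalised scale $N$, the only thing left to verify is that the image of $N$ is freely generated by its irreducibles.

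To make the image explicit, I would invoke Corollary~\ref{cor:gen scale for ra Artin monoids}: the mere existence of $N$ forces $A_\Gamma^+ \cong \bigoplus_{v \in V_u} \N \oplus \bigoplus_{i \in I_2} \IF_{\lvert V_i\rvert}^+$ with $\bigoplus_{i \in I_2}\lvert V_i\rvert$ rationally independent, and makes $N$ unique. By Proposition~\ref{prop:gen scale on direct sum of free monoids} together with Proposition~\ref{prop:unique gen scale on free monoids}, this $N$ restricts on each free summand $\IF_{\lvert V_i\rvert}^+$ to $N_i(w) = \lvert V_i\rvert^{\ell(w)}$ and is trivial on $\bigoplus_{v \in V_u}\N = \ker N$. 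Hence $N(A_\Gamma^+)$ is precisely the submonoid of $(\N^\times,\cdot)$ generated by $\{\lvert V_i\rvert \mid i \in I_2\}$.

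The final, and only nontrivial, step is to read off freeness from rational independence. Rational independence says exactly that the monoid homomorphism $\bigoplus_{i \in I_2}\N \to (\N^\times,\cdot)$ given by $k \mapsto \prod_{i \in I_2}\lvert V_i\rvert^{k_i}$ is injective; since it is by construction surjective onto $N(A_\Gamma^+)$, it is an isomorphism. Therefore $N(A_\Gamma^+)$ is a free commutative monoid with free basis $\{\lvert V_i\rvert \mid i \in I_2\}$, and in a free commutative monoid the free basis coincides with the set of irreducible elements. This verifies the last admissibility condition and closes the reverse implication. The main obstacle is precisely this identification of the irreducibles of the submonoid $N(A_\Gamma^+) \subset \N^\times$: one must rule out hidden multiplicative relations among the $\lvert V_i\rvert$ (for instance, that no $\lvert V_i\rvert$ is a product of powers of the others), and this is exactly what rational independence guarantees.
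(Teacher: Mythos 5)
Your proposal is correct and follows essentially the same route as the paper: both directions are handled by combining Corollary~\ref{cor:reg prop for ra Artin monoids}(iii) (core factorability and $\cap$-closedness hold for every $A_\Gamma^+$) with Corollary~\ref{cor:gen scale for ra Artin monoids} (existence of $N$ forces rational independence), and then reading off free generation by irreducibles from rational independence. Your explicit identification of $N(A_\Gamma^+)$ via Propositions~\ref{prop:unique gen scale on free monoids} and~\ref{prop:gen scale on direct sum of free monoids}, and the observation that rational independence is exactly injectivity of $k \mapsto \prod_{i \in I_2}\lvert V_i\rvert^{k_i}$, simply spells out the final step the paper asserts in one line.
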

\begin{proof}
According to Corollary~\ref{cor:reg prop for ra Artin monoids}~(iii), $A_\Gamma^+$ is core factorable and $(A_\Gamma^+)_{ci}\subset A_\Gamma^+$ is $\cap$-closed, no matter what $\Gamma$ is. By Corollary~\ref{cor:gen scale for ra Artin monoids}, the conditions characterising the existence of a generalised scale $N$ include rational independence of $\bigoplus_{i \in I_2} \lvert V_i\rvert$. This feature implies $\text{Irr}(N(A_\Gamma^+)) = \{ \lvert V_i\rvert \mid i \in I_2\}$ and that this set freely generates $N(A_\Gamma^+)$, which is the last extra condition for admissibility.
\end{proof}

\section*{References}
\begin{biblist}
\bibselect{bib}
\end{biblist}

\end{document}